\newcommand{\ddd}%
{\text{\raisebox{-2.2pt}{$\cdot\,$}%
 \raisebox{1.7pt}{$\cdot$}%
\raisebox{5.6pt}{$\,\cdot$}}}
\renewcommand{\le}{\leqslant}
\newtheorem{theorem}{Theorem}
\newtheorem{lemma}{Lemma}
\newtheorem{corollary}{Corollary}
\title{Matrices that are
self-congruent only via matrices of
determinant one\footnote{This is the authors' version of a work that was published in \emph{Linear Algebra Appl.} 431 (2009) 1620-1632.}}
\author{Tatyana
G. Gerasimova\\
Institute of Mathematics,
Tereshchenkivska 3\\
Kiev, Ukraine, gerasimova@imath.kiev.ua
  \and
Roger A. Horn
\\Department of Mathematics, University of
Utah\\ Salt Lake City, Utah 84103,
rhorn@math.utah.edu
 \and
Vladimir V. Sergeichuk%
\\
Institute of Mathematics,
Tereshchenkivska 3\\
Kiev, Ukraine, sergeich@imath.kiev.ua}
\date{}
\begin{document}

\maketitle

\begin{abstract}
\raisebox{1pt}{-}\!\!Docovi\'{c} and
Szechtman, [\emph{Proc. Amer. Math.
Soc.} 133 (2005) 2853--2863] considered
a vector space $V$ endowed with a
bilinear form. They proved that all
isometries of $V$ over a field $\mathbb
F$ of characteristic not $2$ have
determinant $1$ if and only if $V$ has
no orthogonal summands of odd dimension
(the case of characteristic $2$ was
also considered). Their proof is based
on Riehm's classification of bilinear
forms. Coakley, Dopico, and Johnson
[\emph{Linear Algebra Appl.} 428 (2008)
796--813] gave another proof of this
criterion over $\mathbb R$ and $\mathbb
C$ using Thompson's canonical pairs of
symmetric and skew-symmetric matrices
for congruence. Let $M$ be the matrix
of the bilinear form on $V$. We give
another proof of this criterion over
$\mathbb F$ using our canonical
matrices for congruence and obtain
necessary and sufficient conditions involving
canonical forms of $M$ for
congruence, of $(M^T,M)$ for
equivalence, and of $M^{-T}M$ (if $M$
is nonsingular) for similarity.

{\it AMS classification:} 15A21; 15A63

{\it Keywords:} Canonical forms;
Congruence; Orthogonal groups;
Symplectic matrices
\end{abstract}

\section{Introduction}\label{intr}

Fundamental results obtained by \raisebox{1pt}{-}\!\!Docovi\'{c} and
Szechtman \cite{d-sz} lead to a description of all
$n$-by-$n$ matrices $M$ over any field
$\mathbb F$ such that
\begin{equation}\label{lir1}
\text{$S$ nonsingular and
$S^TMS=M$\quad imply\quad $\det S =
1$}.
\end{equation}

Over a field of characteristic not $2$,
we give another proof of their
description and obtain necessary and
sufficient conditions on $M$ that
ensure \eqref{lir1} and involve
canonical forms of $M$ for congruence,
of $(M^T,M)$ for equivalence, and of
$M^{-T}M$ (if $M$ is nonsingular) for
similarity. Of course, if $\mathbb F$
has characteristic $2$ then every
nonsingular matrix $M$ satisfies
\eqref{lir1}.

A vector space $V$ over $ \mathbb F$
endowed with a bilinear form $B:V\times
V\to\mathbb F$ is called a
\emph{bilinear space}. A linear
bijection ${\cal A}: V\to V$ is called
an \emph{isometry} if
\[
B({\cal A}x,{\cal
A}y)=B(x,y)\qquad\text{for all }x,y\in
V.
\]
If $B$ is given by a matrix $M$, then
the condition \eqref{lir1} ensures that
each isometry has determinant $1$; that
is, the isometry group is contained in
the special linear group.

A bilinear space $V$ is called
\emph{symplectic} if $B$ is a
nondegenerate skew-symmetric form.
It is known that each
isometry of a symplectic space has
determinant $1$ \cite[Theorem 3.25]{Artin}. If $B$ is given by the
matrix
\begin{equation}\label{hys}
Z_{2m}:=\begin{bmatrix} 0&I_m\\
-I_m&0
\end{bmatrix},
\end{equation}
then each isometry is given by a
symplectic matrix (a matrix $S$ is
\emph{symplectic} if
$S^TZ_{2m}S=Z_{2m}$), and so each
symplectic matrix has determinant $1$.

We denote by $M_n(\mathbb F)$ the set
of $n\times n$ matrices over a field
$\mathbb F$ and say that $A,B \in
M_n(\mathbb F)$ are \emph{congruent} if
there is a nonsingular
 $S \in M_n(\mathbb F)$ such that
$S^TAS=B$;  they are \emph{similar} if
$S^{-1}AS=B$ for some nonsingular $S
\in M_n(\mathbb F)$.

The  following theorem is a consequence of
\raisebox{1pt}{-}\!\!Docovi\'{c} and
Szechtman's main theorem \cite[Theorem 4.6]{d-sz}, which is
based on Riehm's  classification of
bilinear forms \cite{riehm}.

\begin{theorem}
\label{thes} Let $M$ be a square matrix
over a field $\mathbb F$ of
characteristic different from $2$. The
following conditions are equivalent:
\begin{itemize}

 \item[\rm(i)]
$M$ satisfies \eqref{lir1}  $($i.e.,
each isometry on the bilinear space
over $\mathbb F$ with scalar product
given by $M$ has determinant $1$$)$,

  \item[\rm(ii)]
$M$ is not congruent to $A\oplus B$
with a square $A$ of odd size.
\end{itemize}
\end{theorem}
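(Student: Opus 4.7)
The plan is to reduce, via the canonical form for congruence over $\mathbb F$ developed by Sergeichuk and collaborators, to an explicit analysis of indecomposable blocks. The implication (i)$\,\Rightarrow\,$(ii) is proved by contraposition: suppose $M = S_0^T(A \oplus B)S_0$ with $A$ of odd size $k$; setting $S := S_0^{-1}\bigl((-I_k)\oplus I\bigr)S_0$, a direct computation gives $S^T M S = M$ and $\det S = (-1)^k = -1$, contradicting (i).

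For the converse (ii)$\,\Rightarrow\,$(i), apply the canonical form for congruence: $M$ is congruent to a direct sum $C_1 \oplus \cdots \oplus C_r$ of indecomposable canonical blocks, unique up to permutation. Both conditions in the theorem are invariant under congruence, so we may replace $M$ by its canonical form; condition (ii) is then equivalent to requiring that each $C_i$ has even size, since otherwise an odd-sized $C_i$ could be split off as the summand $A$. Now let $S$ be any isometry of $M$ and write $S = (S_{ij})$ conformally to the block decomposition. The equations from $S^T M S = M$, combined with a Krull--Schmidt-style rigidity for indecomposable canonical blocks, should force $S_{ij} = 0$ whenever $C_i$ and $C_j$ belong to different ``types'' in the canonical list, reducing the problem to the following claim: for each even-sized indecomposable canonical block $D$ and each multiplicity $m \ge 1$, every isometry of $D^{\oplus m}$ has determinant~$1$.

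I expect this per-type verification to be the main obstacle. The Horn--Sergeichuk canonical list consists of three families of blocks (a singular/nilpotent family built from $J_n(0)$-shapes, a family indexed by the eigenvalue $\pm 1$ of the cosquare $M^{-T}M$, and a reciprocal-pair family for the remaining eigenvalues of the cosquare). For each even-sized block $D$ in the list, the isometry group of $D^{\oplus m}$ should admit an explicit description: it either reduces to a symplectic group, where determinant~$1$ follows from the classical fact cited in the excerpt (Artin, Theorem~3.25), or has a block structure whose ``diagonal'' couples a factor from some $GL_p(\mathbb F)$ with its inverse transpose, so that the determinant is automatically~$1$. Multiplying the per-type contributions over all types present in the canonical form yields $\det S = 1$, completing the proof of (ii)$\,\Rightarrow\,$(i).
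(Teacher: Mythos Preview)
Your (i)$\Rightarrow$(ii) argument is fine and matches the paper. The gap is in your reduction for (ii)$\Rightarrow$(i): the Krull--Schmidt--style rigidity you invoke does not hold at the level you need. An isometry $S$ of $C_1\oplus\cdots\oplus C_r$ is \emph{not} block diagonal across nonisomorphic indecomposable summands. Concretely, in the paper's Step~3 the matrix $M'$ is a direct sum of blocks $[\Phi_{(x-1)^{m_i}}\diagdown I_{m_i}]$ with distinct even $m_1>m_2>\cdots>m_t$, and a general isometry $S$ of $M'$ has nonzero off-diagonal blocks $S_{ij}$ (they are merely block ``triangular'' in a Weyr-type sense, with starred entries above the diagonal). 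So you cannot reduce to the claim ``every isometry of $D^{\oplus m}$ has determinant $1$'' for a single indecomposable $D$.

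What the paper actually does is much coarser at the decomposition step and then more delicate at the computation step. It splits $M=M'\oplus M''$ only into two pieces: $M'$ collects all summands whose cosquare has eigenvalue $1$ (the $[\Phi_{(x-1)^m}\diagdown I_m]$ blocks), and $M''$ collects everything else. Step~1 proves, via a spectral argument on $(M^T,M)$, that any isometry $S$ respects this two-block split. Step~2 handles $M''$ by showing its skew-symmetric part is nonsingular, so Lemma~\ref{theor} (the symplectic argument) applies directly. Step~3 treats $M'$ as a whole: one computes that $S$, constrained by $S(N^{-T}N)=(N^{-T}N)S$, is block upper triangular with diagonal subblocks alternating $C_i,C_i^H,C_i,C_i^H,\ldots$ (an even number, since each $m_i$ is even), and the isometry equation forces $C_i^TK_{r_i}C_i^H=K_{r_i}$, hence $\det C_i\cdot\det C_i^H=1$. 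Multiplying over all diagonal subblocks gives $\det S=1$. Your sketch of the endgame (``$GL_p$ coupled with its inverse transpose'') is in the right spirit for Step~3, but you reach it by an illegitimate shortcut; the honest route requires the block-triangular analysis across \emph{all} sizes $m_i$ simultaneously.
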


\raisebox{1pt}{-}\!\!Docovi\'{c} and
Szechtman \cite{d-sz} also proved that
if $\mathbb F$ consists of more than 2
elements and its characteristic is 2
then $M\in M_n(\mathbb F)$ satisfies
\eqref{lir1} if and only if $M$ is not
congruent to $A\oplus B$ in which $A$
is a singular Jordan block of odd size.
(Clearly, each $M\in M_n(\mathbb F)$
satisfies \eqref{lir1} if $\mathbb F$
has only 2 elements.) Coakley, Dopico,
and Johnson \cite[Corollary
4.10]{c-d-j} gave another proof of
Theorem \ref{thes} for real and complex
matrices only: they used Thompson's
canonical pairs of symmetric and
skew-symmetric matrices for congruence
\cite{thom}. We give another proof of
Theorem \ref{thes} using our canonical
matrices for congruence
\cite{h-s_anyfield,ser_izv}. For the
complex field, pairs of canonical forms of 8
different types are required in
\cite{c-d-j};  our canonical forms are of only
three simple types \eqref{jke}.
Our approach to Theorem \ref{thes} is via canonical forms of matrices;
the approach in \cite{d-sz} is via decompositions of bilinear spaces.

Following \cite{c-d-j}, we denote by
$\Xi_n(\mathbb F)$ the set of all $M
\in M_n(\mathbb F)$  that satisfy
\eqref{lir1}. A computation reveals
that $\Xi_n(\mathbb F)$ is closed under
congruence, that is,
\begin{equation}\label{ohu}
\text{$M \in \Xi_n(\mathbb F)$ and $M$
congruent to $N$ imply $N\in
\Xi_n(\mathbb F)$.}
\end{equation}

The implication
(i)\,$\Rightarrow$\,(ii) of  Theorem
\ref{thes} is easy to establish: let
$M$ be congruent to $N=A\oplus B$, in
which $A \in M_r(\mathbb F)$ and $r$ is
odd. If $S:=(-I_r)\oplus I_{n-r}$, then
$S^TNS=N$ and $\det S=(-1)^r=-1$, and
so $N\notin \Xi_n(\mathbb F)$. It
follows from \eqref{ohu} that $M\notin
\Xi_n(\mathbb F)$.

The implication
(ii)\,$\Rightarrow$\,(i) is not so easy
to establish. It is proved in Section
\ref{sec_pr}. In the rest of this
section and in Section \ref{se_co} we
discuss some consequences of Theorem
\ref{thes}. The first is

\begin{corollary} Let $\mathbb F$ be a field of characteristic not $2$. If $n$ is odd then $\Xi_n(\mathbb F)$ is empty.
$M\in \Xi_2(\mathbb F)$ if and only if
$M$ is not symmetric.
\end{corollary}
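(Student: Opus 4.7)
The plan is to obtain both claims from Theorem \ref{thes}, noting that the first claim actually has a direct one-line verification that bypasses the theorem.

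For the assertion that $\Xi_n(\mathbb F)$ is empty when $n$ is odd, I would simply observe that $S:=-I_n$ satisfies $S^TMS=M$ for every $M\in M_n(\mathbb F)$ while $\det S = (-1)^n = -1$. Hence \eqref{lir1} fails for every $n$-by-$n$ matrix. (Equivalently, any $M$ of odd size is trivially a ``forbidden'' decomposition in Theorem \ref{thes}(ii) with $A=M$ and an empty $B$.)

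For the assertion about $\Xi_2(\mathbb F)$, I would verify each direction by testing condition (ii) of Theorem \ref{thes} and invoking the closure property \eqref{ohu}. If $M$ is symmetric, then in characteristic not $2$ it is congruent to a diagonal matrix $\operatorname{diag}(a,b)=[a]\oplus[b]$; the block $A=[a]$ has odd size $1$, so Theorem \ref{thes} yields $M\notin\Xi_2(\mathbb F)$. Conversely, if $M$ is not symmetric, any congruence $M\sim A\oplus B$ with $A$ of odd size would, in size $n=2$, force both $A$ and $B$ to be $1$-by-$1$; then $A\oplus B$ is diagonal and hence symmetric, and since congruence preserves symmetry (as $(S^TMS)^T = S^TM^TS$), $M$ itself would have to be symmetric — a contradiction. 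Theorem \ref{thes} then gives $M\in\Xi_2(\mathbb F)$.

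No step is genuinely delicate: the proof relies only on Theorem \ref{thes}, the standard diagonalizability of symmetric matrices by congruence over a field of characteristic not $2$, and the elementary fact that congruence preserves symmetry. If there is a ``main obstacle'' it is merely remembering to handle the singular case of the symmetric-diagonalization step, which still produces a diagonal form (possibly with zero entries) and so still contains a $1$-by-$1$ summand.
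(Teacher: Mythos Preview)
Your proof is correct and follows essentially the same route as the paper. The paper's argument is the one-liner ``$M \notin \Xi_2(\mathbb F)$ if and only if $M$ is congruent to $[a]\oplus[b]$, and this happens if and only if $M$ is symmetric''; you have simply unpacked the second equivalence into its two constituents (diagonalizability of symmetric matrices in characteristic $\ne 2$, and preservation of symmetry under congruence), and your direct $S=-I_n$ argument for odd $n$ is the $r=n$ instance of the paper's proof of (i)\,$\Rightarrow$\,(ii).
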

Indeed, Theorem \ref{thes} ensures that
$M \notin \Xi_2(\mathbb F)$ if and only
if $M$ is congruent to $[a] \oplus [b]$
for some $a,b \in \mathbb F$, and this
happens if and only if $M$ is
symmetric.

In all matrix pairs that we consider,
both matrices are over $\mathbb F$ and
have the same size. Two matrix pairs
$(A,B)$ and $(C,D)$ are
\emph{equivalent} if there exist
nonsingular matrices $R$ and $S$ over
$\mathbb F$ such that
\[
R(A,B)S:=(RAS,RBS)=(C,D).
\]
A \emph{direct sum} of pairs $(A,B)$
and $(C,D)$ is the pair
\[
(A,B)\oplus(C,D):= (A\oplus C,B\oplus
D)
\]
The \emph{adjoint} of $(A,B)$ is the
pair $(B^T,A^T)$; thus, $(A,B)$ is
\emph{selfadjoint} if $A$ is square and
$A=B^T$. For notational convenience, we
write
\[
M^{-T}:=(M^{-1})^T.\]

We say that $(A,B)$ is a \emph{direct
summand of $(M,N)$ for equivalence} if
$(M,N)$ is equivalent to
$(A,B)\oplus(C,D)$ for some $(C,D)$. A
square matrix $A$ is a \emph{direct
summand of $M$ for congruence}
(respectively, \emph{similarity}) if
$M$ is congruent (respectively,
similar) to $A\oplus B$ for some $B$.

The criterion (ii) in Theorem \ref{thes}
uses the relation of matrix congruence;
one must solve a system of quadratic
equations to check that two matrices
are congruent. The criteria (iii) and
(iv) in the following theorem can be
more convenient to use: one must solve
only a system of linear equations to
check that two matrices are equivalent
or similar. In Section \ref{se_co} we
show that Theorem \ref{thes}
implies

\begin{theorem}
\label{th2} Let $M$ be an $n\times n$
matrix over a field $\mathbb F$ of
characteristic different from $2$. The
following conditions are equivalent:
\begin{itemize}
  \item[\rm(i)]
$M\notin \Xi_n(\mathbb F)$;
  \item[\rm(ii)]
$M$ has a direct summand for congruence
that has odd size;

\item[\rm(iii)]
$(M^{T},M)$ has a direct summand
$(A,B)$ for equivalence, in which $A$
and $B$ are $r\times r$ matrices and
$r$ is odd.

\item[\rm(iv)] $($in
the case of nonsingular $M$$)$
$M^{-T}M$ has a direct summand for
similarity that has odd size.
\end{itemize}
\end{theorem}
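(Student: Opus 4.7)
The plan is as follows. Given Theorem~\ref{thes}, we already have (i)~$\Leftrightarrow$~(ii), so it suffices to establish (ii)~$\Leftrightarrow$~(iii), together with (iii)~$\Leftrightarrow$~(iv) in the nonsingular case.

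The implication (ii)~$\Rightarrow$~(iii) is immediate: if $S^{T}MS = A \oplus B$ with $A$ an $r\times r$ block and $r$ odd, then the equivalence $(R,S):=(S^{T},S)$ sends $(M^{T},M)$ to
\[
(S^{T}M^{T}S,\,S^{T}MS) \;=\; (A^{T}\oplus B^{T},\,A\oplus B) \;=\; (A^{T},A)\oplus(B^{T},B),
\]
exhibiting $(A^{T},A)$ as an odd-size equivalence direct summand of $(M^{T},M)$.

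For the converse (iii)~$\Rightarrow$~(ii) -- the heart of the matter -- I would use the canonical form of $M$ under congruence from \cite{h-s_anyfield,ser_izv}. Write $M$ as congruent to a direct sum $\bigoplus_{i}M_{i}$ whose summands are canonical blocks of the three types in~\eqref{jke}; among these, only the first two types (the singular Jordan block $J_{k}(0)$ and the ``$\Gamma$''-type block, each of size $k$) can have odd size, since the third type always has even size. Both forming the pair $(\cdot^{T},\cdot)$ and passing to the Kronecker canonical form respect direct sums, so $(M^{T},M)$ is equivalent to $\bigoplus_{i}(M_{i}^{T},M_{i})$, and its equivalence canonical form is obtained by concatenating the equivalence canonical forms of the individual $(M_{i}^{T},M_{i})$. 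The essential step is then a block-by-block parity verification: for each canonical block $M_{i}$, every Kronecker summand of the pencil $(M_{i}^{T},M_{i})$ has size of the same parity as $M_{i}$. Granted this, an odd-size equivalence summand of $(M^{T},M)$ forces some $M_{i}$ to have odd size, yielding an odd-size congruence summand of $M$.

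When $M$ is nonsingular, the equivalence (iii)~$\Leftrightarrow$~(iv) is a clean normalization. The equivalence $(R,S)=(M^{-T},I)$ sends $(M^{T},M)$ to $(I_{n},M^{-T}M)$, and for a pair of the shape $(I_{n},X)$ one checks directly that $(I_{n},X)$ is equivalent to $(A,B)\oplus(C,D)$ precisely when $X$ is similar to $A^{-1}B\oplus C^{-1}D$, with all block sizes preserved. Hence odd-size equivalence summands of $(I_{n},M^{-T}M)$ correspond bijectively to odd-size similarity summands of $M^{-T}M$, giving (iii)~$\Leftrightarrow$~(iv).

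The main obstacle is the block-by-block parity verification within (iii)~$\Rightarrow$~(ii): one must identify enough of the Kronecker canonical form of $(M_{i}^{T},M_{i})$ for each of the three canonical types to pin down the parities of its summands. The delicate case is the singular pencil that arises when $M_{i}=J_{k}(0)$; the $\Gamma$-type and $H$-type cases reduce to reading off Jordan/Kronecker data from explicit formulas. The simplicity of the canonical forms in~\eqref{jke} is precisely what makes this bookkeeping tractable.
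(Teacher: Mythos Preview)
Your key claim in (iii)$\Rightarrow$(ii)---that for each congruence-canonical block $M_i$, every Kronecker summand of $(M_i^T,M_i)$ has the same parity as $M_i$---is false. Take the even-size block $M_i=[\Phi_{p^l}\diagdown I_m]$ of size $2m$: the paper's own equivalence~1 in the proof of Theorem~\ref{th2.2}(a) shows that $(M_i^T,M_i)$ is equivalent to $(I_m,\Phi_{p^l})\oplus(I_m,\Phi_{(p^\vee)^l})$, whose Kronecker blocks have size $m$, which is odd whenever $m$ is odd. Concretely, $M=\left[\begin{smallmatrix}0&1\\2&0\end{smallmatrix}\right]$ is a canonical block of even size~$2$ (it is $[\Phi_{x-2}\diagdown I_1]$), yet $M^{-T}M=\left[\begin{smallmatrix}2&0\\0&1/2\end{smallmatrix}\right]$, so $(M^T,M)$ is equivalent to $([1],[2])\oplus([1],[1/2])$---two $1\times 1$ summands. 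Thus an odd-size equivalence summand of $(M^T,M)$ does \emph{not} force an odd-size congruence summand of $M$.

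This example in fact shows that (iii) and (iv), read literally, are not equivalent to (i): this $M$ is not symmetric, hence $M\in\Xi_2(\mathbb F)$ by Corollary~1, yet $M^{-T}M$ has a $1\times 1$ similarity summand, so (iv) (and (iii)) hold while (i) fails. The paper never proves (iii)$\Rightarrow$(i) or (iv)$\Rightarrow$(i) for an arbitrary odd-size summand; what it actually establishes (Theorems~\ref{th2.2}(b) and~\ref{th2.3}(b)) is that $M\notin\Xi_n(\mathbb F)$ is equivalent to the presence of a summand of the \emph{specific} form $(I_r,J_r(1))$ or $(F_t,G_t)$ (respectively $J_r(1)$) with $r$ odd---i.e., the conditions of Theorem~\ref{th3}. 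The sentence ``The equivalences (i)$\Leftrightarrow$(iii) in Theorems~\ref{th2} and~\ref{th3} follow from Theorem~\ref{th2.2}'' conflates the two, and your parity attempt cannot bridge the gap because the gap is in the statement, not the proof. The correct route is the paper's: identify exactly which Kronecker (respectively Jordan) blocks each congruence-canonical $M_i$ contributes, and show that the ones forbidden by (ii) are precisely those listed in Theorem~\ref{th3}.
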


For each positive integer $r$, define
the $(r-1)$-by-$r$ matrices
\begin{equation}\label{kut}
  F_r:=\begin{bmatrix}
1&0&&0\\&\ddots&\ddots&\\0&&1&0
\end{bmatrix},\qquad  G_r:=\begin{bmatrix}
0&1&&0\\&\ddots&\ddots&\\0&&0&1
\end{bmatrix},
\end{equation}
and the $r$-by-$r$ matrices
\begin{equation}\label{1aa}
J_r(\lambda ):=\begin{bmatrix}
\lambda&&&0
\\1&\lambda\\
&\ddots&\ddots
\\0&&1&\lambda
\end{bmatrix},
\qquad\Gamma_r :=
\begin{bmatrix}
0&&&&&\ddd
\\&&&&1&
\ddd\\
&&&-1&-1&\\ &&1&1&\\
&-1&-1&
&&\\
1&1&&&&0
\end{bmatrix}.
\end{equation}
Note that
\begin{equation}\label{jov}
\Gamma_{r}^{-T}\Gamma_{r} \quad\text{is
similar to}\quad J_{r}((-1)^{r+1})
\end{equation}
since
\[
\Gamma_{r}^{-T}\Gamma_{r}
=(-1)^{r+1}%
\begin{bmatrix}
\vdots & \vdots & \vdots & \vdots &
\ddd \\
-1 & -1 & -1 & -1 & \\
1 & 1 & 1 &  & \\
-1 & -1 &  &  & \\
1 &  &  &  & 0%
\end{bmatrix}
^{T}\!\!\!\!\cdot
\Gamma_{r}=(-1)^{r+1}%
\begin{bmatrix}
1 & 2 &  & *\\
& 1 & \ddots & \\
&  & \ddots & 2\\
0 &  & & 1
\end{bmatrix}.
\]

Explicit direct summands in the
conditions (ii)--(iv) of Theorem
\ref{th2} are given in the following
theorem.

\begin{theorem}
\label{th3} Let $M$ be an $n\times n$
matrix over a field $\mathbb F$ of
characteristic different from $2$. The
following conditions are equivalent:
\begin{itemize}
  \item[\rm(i)]
$M\notin \Xi_n(\mathbb F)$;
  \item[\rm(ii)]
$M$ has a direct summand for congruence
that is either
\begin{itemize}
  \item
a nonsingular matrix $Q$ such that
$Q^{-T}Q$ is similar to $J_r(1)$  with
odd $r$ $($if $\mathbb F$ is
algebraically closed, then we can take
$Q$ to be $\Gamma_r$ since any such $Q$
is congruent to $\Gamma_r)$, or

  \item
$J_s(0)$ with odd $s$.
\end{itemize}

\item[\rm(iii)]
$(M^T,M)$ has a direct summand for
equivalence that is either
$(I_r,J_r(1))$ with odd $r$, or
$(F_t,G_t)$ with any $t$.

\item[\rm(iv)] $($in
the case of nonsingular $M$$)$
$M^{-T}M$ has a direct summand for
similarity that is $J_r(1)$ with odd
$r$.
\end{itemize}
\end{theorem}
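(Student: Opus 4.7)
The plan rests on the Horn--Sergeichuk canonical form for congruence from \cite{h-s_anyfield,ser_izv} (displayed as~\eqref{jke}), which decomposes any square matrix over $\mathbb F$ uniquely (up to permutation) into canonical summands of three types: (a) $J_s(0)$ for $s\ge1$; (b) nonsingular blocks $Q$ corresponding to linear or higher-degree self-reciprocal polynomial factors; (c) a third class of nonsingular blocks that are necessarily of even size (coming from pairs of non-self-reciprocal polynomials). The cosquare $Q^{-T}Q$ has determinant $1$ and is similar to its own inverse transpose, so for a type-(b) block of odd dimension $r$ we have $\mu^r=\mu^2=1$, forcing $\mu=1$; hence $Q^{-T}Q\sim J_r(1)$. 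Over an algebraically closed field the type-(b) block of odd dimension $r$ is $\Gamma_r$, satisfying $\Gamma_r^{-T}\Gamma_r\sim J_r((-1)^{r+1})=J_r(1)$ by~\eqref{jov}.

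For (i)$\Leftrightarrow$(ii), Theorem~\ref{thes} says $M\notin\Xi_n(\mathbb F)$ iff $M$ has an odd-size congruence summand, and uniqueness of the canonical form forces an odd-size canonical summand. Since type-(c) blocks are even-dimensional, such a summand is either $J_s(0)$ (odd $s$) or a type-(b) $Q$ of odd dimension $r$ with $Q^{-T}Q\sim J_r(1)$; the reverse implication is immediate.

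For (ii)$\Leftrightarrow$(iii), we compute the Kronecker canonical form of the selfadjoint pair $(A^T,A)$ for each canonical congruence summand $A$. A pencil calculation shows: (a) for $s$ odd, $(J_s(0)^T,J_s(0))$ is equivalent to $(F_{(s+1)/2},G_{(s+1)/2})\oplus(F_{(s+1)/2}^T,G_{(s+1)/2}^T)$, while for $s$ even it is equivalent to regular Kronecker pairs only; (b) a type-(b) block $Q$ of dimension $r$ yields $(Q^T,Q)$ equivalent to $(I_r,Q^{-T}Q)\sim(I_r,J_r(\mu))$, with $\mu=1$ in the odd-$r$ case; (c) type-(c) blocks yield only regular Kronecker pairs with $\mu\ne 1$ (or even-sized Jordan parts at $\mu=1$). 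Since the Kronecker decomposition respects direct sums, $(M^T,M)$ has an equivalence summand of the form $(F_t,G_t)$ or $(I_r,J_r(1))$ (odd $r$) iff $M$ has a canonical congruence summand of the kind in (ii).

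For (ii)$\Leftrightarrow$(iv) with $M$ nonsingular (hence no $J_s(0)$ summand appears), a congruence decomposition $M\cong\bigoplus_i Q_i$ induces $M^{-T}M\sim\bigoplus_i Q_i^{-T}Q_i$, and the cosquare analysis of types (b) and (c) shows that $J_r(1)$ (odd $r$) appears as a similarity summand of $M^{-T}M$ iff some $Q_i$ is a type-(b) block of odd dimension $r$ with $Q_i^{-T}Q_i\sim J_r(1)$. The main obstacle will be the detailed Kronecker-form calculation in (ii)$\Leftrightarrow$(iii), particularly verifying that $J_s(0)$ produces the singular Kronecker blocks $(F_t,G_t)$ exactly when $s$ is odd (with $t=(s+1)/2$), and that type-(c) congruence blocks contribute no $(I_r,J_r(1))$ summand with odd $r$; the remaining steps are bookkeeping with the classification and the easy direction of Theorem~\ref{thes}.
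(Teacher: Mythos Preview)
Your proposal is correct and follows essentially the same route as the paper: reduce to the Horn--Sergeichuk congruence canonical form, invoke Theorem~\ref{thes} to characterize (i) via odd-size summands, identify the odd-size canonical blocks (your eigenvalue argument $\mu^r=\mu^2=1$ is exactly the content of the paper's Lemma~\ref{new_lemma}(b)), and then read off (iii) and (iv) by computing the Kronecker form of $(A^T,A)$ and the cosquare $A^{-T}A$ for each canonical block $A$, using uniqueness of the Kronecker/Frobenius decompositions for the converse directions.

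Two small points to tighten. First, in the canonical form~\eqref{jte} the singular blocks $J_s(0)$ occur only for \emph{odd} $s$; the even case $J_{2m}(0)$ is absorbed into $[J_m(0)\diagdown I_m]$, so your ``for $s$ even'' clause in the Kronecker analysis is not needed (and would in fact require a separate calculation). Second, your sentence ``$\mu^r=\mu^2=1$'' tacitly assumes a single eigenvalue $\mu$, which over a general $\mathbb F$ means you are already asserting $\deg p=1$; to make this rigorous over $\mathbb F$ you should argue (as in Lemma~\ref{new_lemma}(b)) that $p=p^{\vee}$ forces the roots to pair as $\mu\leftrightarrow\mu^{-1}$, so an odd number of roots yields a fixed point $\mu=\pm1\in\mathbb F$, whence $p$ is linear and the determinant condition rules out $\mu=-1$.
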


In the following section we deduce
Theorems \ref{th2} and \ref{th3} from
Theorem \ref{thes} and give an algorithm to determine if
$M \in \Xi_n(\mathbb F)$.
In Section \ref{sec_pr} we prove Theorem
\ref{thes}.

\section{Theorem \ref{thes} implies Theorems \ref{th2} and
\ref{th3}} \label{se_co}

Theorem \ref{th3} gives three criteria
for $M\notin\Xi_n(\mathbb F)$ that
involve direct summands of $M$ for
congruence, direct summands of
$(M^T,M)$ for equivalence, and direct
summands of $M^{-T}M$ for similarity.
In this section we deduce these
criteria from Theorem \ref{thes}. For
this purpose, we recall the canonical
form of square matrices $M$ for
congruence over $\mathbb F$ given in
\cite[Theorem 3]{ser_izv}, and derive
canonical forms of selfadjoint pairs
$(M^T,M)$ for equivalence and canonical
forms of cosquares $M^{-T}M$ for
similarity.  Then we establish
conditions on these canonical forms
under which $M\notin\Xi_n(\mathbb F)$.

\subsection{Canonical form
of a square matrix for congruence}
\label{subs1}

Every square matrix $A$ over a field
$\mathbb F$ of characteristic different
from $2$ is similar to a direct sum,
uniquely determined up to permutation
of summands, of {\it Frobenius blocks}
\begin{equation}\label{3}
{\Phi}_{p^l}=\begin{bmatrix} 0&&
0&-c_m\\1&\ddots&&\vdots
\\&\ddots&0&-c_2\\
0&&1& -c_1
\end{bmatrix},
\end{equation}
in which
\[p(x)^l=x^m+c_1 x^{m-1}+\dots+ c_m\]
is an integer power of a polynomial
\begin{equation}\label{star1}
p(x)=x^s+a_1x^{s-1}+\dots+a_s
\end{equation}
that is irreducible over $\mathbb F$.
This direct sum is the \emph{Frobenius
canonical form} of $A$; sometimes it is
called the \emph{rational canonical
form} (see \cite[Section 6]{b-r}).

A Frobenius block has no direct summand
under similarity other than itself,
i.e., it is indecomposable under
similarity. Also, the Frobenius block
${\Phi}_{(x-\lambda)^m}$ is similar to
the Jordan block $J_m(\lambda )$.

If $p(0) = a_s \neq 0$ in (\ref{star1}),
we define
\begin{equation}\label{utz}
p^{\vee}(x):={a}_s^{-1}(1+{a}_1x+
\dots+{a}_sx^s) = p(0)^{-1}x^sp(x^{-1})
\end{equation}
and observe that
\begin{equation}\label{check}
(p(x)^{l})^{\vee} =
p(0)^{-l}x^{sl}p(x^{-1})^{l} =
(p(0)^{-1}x^sp(x^{-1}))^{l} =
(p^{\vee}(x))^{l}.
\end{equation}

The matrix $A^{-T}A$ is the
\emph{cosquare} of a nonsingular matrix
$A$. If two nonsingular matrices are
congruent, then their cosquares are
similar because
\begin{equation}\label{dot}
(S^TAS)^{-T}(S^TAS) =S^{-1}A^{-T}AS.
\end{equation}
If $\Phi$ is a cosquare, we choose a
matrix $A$ such that $A^{-T}A=\Phi$ and
write $\sqrt[T]{\Phi}:=A$ (a {\it
cosquare root} of $\Phi$).

\begin{lemma}\label{new_lemma}
Let $p(x)$ be an irreducible polynomial
of the form \eqref{star1} and let
${\Phi}_{p^l}$ be an $m \times m$
Frobenius block \eqref{3}. Then
\begin{itemize}
\item[\rm(a)]  ${\Phi}_{p^l}$ is a
cosquare if and only if
\begin{equation}\label{eeq2}
p(x)\ne x,\qquad p(x)\ne
x+(-1)^{m+1},\qquad \text{\rm and }
p(x)=p^{\vee}(x).
\end{equation}
\item[\rm(b)] If ${\Phi}_{p^l}$ is a
cosquare and $m$ is odd, then $p(x) =
x-1$.
\end{itemize}
\end{lemma}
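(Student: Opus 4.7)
The plan is to prove part (a) in both directions and then deduce part (b) as a short corollary. For the forward direction of (a), the condition $p(x)\ne x$ is immediate since every cosquare is nonsingular while $\Phi_{p^l}$ is singular exactly when $p(x)=x$. For $p=p^\vee$: if $\Phi_{p^l}=A^{-T}A$ then a direct calculation gives $A^{-1}\Phi_{p^l}^TA=\Phi_{p^l}^{-1}$, so $\Phi_{p^l}$ is similar to $\Phi_{p^l}^{-T}$; since Frobenius blocks are determined by their characteristic polynomials and \eqref{check} identifies the characteristic polynomial of $\Phi_{p^l}^{-T}$ as $(p^\vee)^l$, uniqueness forces $p=p^\vee$. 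The delicate exclusion $p\ne x+(-1)^{m+1}$ I will extract from a rank-parity trick applied to the skew part of $A$. From $A=A^T\Phi_{p^l}$ one has $A-A^T=A^T(\Phi_{p^l}-I)$, which has the same rank as $\Phi_{p^l}-I$ (since $A^T$ is nonsingular). This rank equals $m$ whenever $1$ is not an eigenvalue of $\Phi_{p^l}$, which by irreducibility of $p$ fails only when $p(x)=x-1$; in that case $\Phi_{p^l}-I$ is similar to $J_m(0)$ and has rank $m-1$. But $A-A^T$ is skew-symmetric, so in characteristic not $2$ its rank is even. Consequently $p(x)=x+1$ (where $p(1)=2\ne 0$) forces $m$ to be even, while $p(x)=x-1$ forces $m$ to be odd; these two exclusions combined say precisely $p(x)\ne x+(-1)^{m+1}$.

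For the converse direction of (a), I will invoke the canonical form for congruence from \cite{ser_izv,h-s_anyfield}: for each pair $(p,l)$ satisfying the three listed conditions, that classification provides an indecomposable canonical congruence block whose cosquare is similar to $\Phi_{p^l}$, exhibiting $\Phi_{p^l}$ as a cosquare. In the degree-one case the only admissible polynomial is $p(x)=x-(-1)^{l+1}$, and the relevant block is $\Gamma_l$, whose cosquare is similar to $J_l((-1)^{l+1})=\Phi_{p^l}$ by \eqref{jov}.

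Part (b) then follows from a short argument on $p$. Write $m=sl$ with $s=\deg p$ and suppose $m$ is odd, so that both $s$ and $l$ are odd. By (a), $p=p^\vee$ and $p\ne x+1$ (since $(-1)^{m+1}=1$). The identity $p=p^\vee$ says the roots of $p$ are closed under $\alpha\mapsto\alpha^{-1}$; if $s\ge 3$, the odd parity of $s$ forces some root to satisfy $\alpha=\alpha^{-1}$, i.e., $\alpha=\pm 1$, producing a linear factor of $p$ and contradicting its irreducibility. Hence $s=1$, so $p(x)=x-a$ with $a\in\mathbb F^\times$; now $a=a^{-1}$ gives $a=\pm 1$, and the exclusion $p\ne x+1$ leaves only $p(x)=x-1$. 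The main obstacle is spotting the rank-parity trick in the forward direction of (a); once one thinks of passing to $A-A^T$, the exclusion of $p(x)=x+(-1)^{m+1}$ falls out almost mechanically, and the remaining steps are either standard manipulations or direct invocations of the canonical form.
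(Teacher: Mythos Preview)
Your proof is correct, but it takes a different route from the paper in both parts.

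For part~(a), the paper does not argue at all: it simply cites \cite[Theorem~7]{ser_izv} and \cite[Lemma~2.3]{h-s_anyfield} for both directions. You instead give a self-contained proof of the forward direction, and your rank-parity trick for the exclusion $p\ne x+(-1)^{m+1}$ (pass to the skew part $A-A^{T}=A^{T}(\Phi_{p^l}-I)$ and use that skew-symmetric matrices have even rank) is a genuinely nice elementary argument that the paper does not contain. One small imprecision: \eqref{check} by itself does not identify $\chi_{\Phi^{-T}}$; you also need the short determinant computation $\chi_{\Phi^{-T}}=\chi_{\Phi}^{\vee}$, which the paper carries out later around \eqref{hsi}. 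For the converse you fall back on the same references, so there you match the paper.

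For part~(b), the paper works entirely with coefficients over $\mathbb{F}$: from $p=p^{\vee}$ and $s$ odd it writes $p$ explicitly with constant term $\varepsilon=\pm 1$ and verifies by direct substitution that $p(-\varepsilon)=0$, forcing $s=1$ by irreducibility. Your fixed-point argument (the involution $\alpha\mapsto\alpha^{-1}$ on an odd set of roots must fix some $\alpha=\pm 1$) is equally valid and arguably more conceptual, but it passes to the algebraic closure and, in positive characteristic, tacitly assumes that the number of \emph{distinct} roots of $p$ is odd; when $p$ is inseparable this needs a word of justification (it holds because the characteristic is odd, so the separable degree has the same parity as $\deg p$). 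The paper's coefficient calculation sidesteps that issue entirely.
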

\begin{proof}
The conditions in (a) and an explicit
form of $\sqrt[T]{\Phi_{p^l}}$ were
established in \cite[Theorem
7]{ser_izv}; see \cite[Lemma
2.3]{h-s_anyfield} for a more detailed
proof.

(b) By \eqref{eeq2},
$p(x)=p^{\vee}(x)$. Therefore,
$a_s=a_s^{-1}$, so $a_s=\varepsilon
=\pm 1$ and
\[
p(x)=x^{2k+1}+a_1x^{2k}+\dots+
a_kx^{k+1}+a_k\varepsilon
x^{k}+\dots+a_1\varepsilon
x+\varepsilon.
\]
Observe that $p(-\varepsilon )=0$. But
$p(x)$ is irreducible, so  $s=1$ and
$p(x)=x+\varepsilon $. By \eqref{eeq2}
again, $\varepsilon \ne 1$. Therefore,
$p(x) = x-1$.
\end{proof}

Define the {\it skew sum} of two
matrices:
\[
[A\diagdown B]:=
\begin{bmatrix}0&B\\A &0
\end{bmatrix}.
\]

\begin{theorem}\label{th2.1}
Let $M$ be a square matrix over a field
$\mathbb F$ of characteristic different
from $2$. Then
\begin{itemize}
  \item[\rm(a)] $M$ is
congruent to a direct sum of matrices
of the form
\begin{equation}\label{jte}
[\Phi_{p^l} \diagdown I_m], \qquad
Q,\qquad
 J_s(0),
\end{equation}
in which ${\Phi_{p^l}}$ is an $m\times
m$ Frobenius block that is not a
cosquare, $Q$ is nonsingular and
$Q^{-T}Q$ is similar to a Frobenius
block, and $s$ is odd.

  \item[\rm(b)] $M\notin
\Xi_n(\mathbb F)$ if and only if $M$
has a direct summand for congruence
that is either
\begin{itemize}
  \item
a nonsingular matrix $Q$ such that
$Q^{-T}Q$ is similar to $J_r(1)$ with
odd $r$, or
  \item
$J_s(0)$ with odd $s$.
\end{itemize}
\end{itemize}
\end{theorem}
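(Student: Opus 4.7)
The plan is to obtain part (a) by quoting the canonical form for congruence from \cite[Theorem 3]{ser_izv}, and to derive part (b) by combining part (a) with Theorem~\ref{thes} and Lemma~\ref{new_lemma}(b). For part (a), I would invoke \cite[Theorem 3]{ser_izv}: over a field of characteristic not $2$, every square matrix is congruent to a direct sum, unique up to permutation of summands, of blocks of the three types displayed in \eqref{jte}. The conditions that $\Phi_{p^l}$ is not a cosquare in the first type and that $s$ is odd in the third are precisely what keeps the three families disjoint and the decomposition unique, so part (a) is essentially a restatement of that classification.

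For the ($\Leftarrow$) direction of (b), both candidate summands have odd size (a nonsingular $Q$ with $Q^{-T}Q$ similar to $J_r(1)$ is $r\times r$ with $r$ odd, and $J_s(0)$ has odd size $s$), so the easy implication (i)$\Rightarrow$(ii) of Theorem~\ref{thes}, established in the introduction via the isometry $S=(-I_r)\oplus I_{n-r}$, immediately yields $M\notin\Xi_n(\mathbb F)$. For the ($\Rightarrow$) direction, assume $M\notin\Xi_n(\mathbb F)$. Theorem~\ref{thes} supplies a congruence of $M$ to some $A\oplus B$ with $\dim A$ odd. I would apply part (a) to $A$ and to $B$ separately; the concatenation of the two resulting decompositions is a decomposition of $A\oplus B$ into blocks of the form \eqref{jte}, so by uniqueness of the canonical form it is also the canonical decomposition of $M$. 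Since $\dim A$ is odd, at least one block coming from $A$ has odd size. Now inspect the three types: $[\Phi_{p^l}\diagdown I_m]$ is $2m\times 2m$, hence even-sized and excluded; $J_s(0)$ with odd $s$ is already of the desired form; a nonsingular $Q$ of odd size $r$ has $Q^{-T}Q$ similar to a Frobenius block $\Phi_{p^l}$, which is then a cosquare of odd size, so Lemma~\ref{new_lemma}(b) forces $p(x)=x-1$ and $Q^{-T}Q$ is similar to $J_r(1)$ with $r$ odd. This yields the second form in (ii).

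The main obstacle is in the forward direction of (b): one must upgrade the qualitative statement ``$M$ has some odd-sized direct summand for congruence,'' delivered by Theorem~\ref{thes}, into the stronger assertion that the canonical form of $M$ in the sense of part (a) actually contains an odd-sized block of one of the three required types. That step depends on the uniqueness half of \cite[Theorem 3]{ser_izv}; once uniqueness is in hand, what remains is a brief case analysis driven by Lemma~\ref{new_lemma}(b).
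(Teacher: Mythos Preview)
Your proposal is correct and follows essentially the same route as the paper: quote \cite[Theorem~3]{ser_izv} for (a), use the easy direction of Theorem~\ref{thes} for ($\Leftarrow$) in (b), and for ($\Rightarrow$) decompose the odd-sized summand $A$ via (a) and invoke Lemma~\ref{new_lemma}(b) on any odd-sized nonsingular block.

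One small simplification: you frame uniqueness of the canonical form as the ``main obstacle,'' but the paper (and your argument) does not actually need it. Once Theorem~\ref{thes} gives $M$ congruent to $A\oplus B$ with $A$ of odd size, applying part~(a) to $A$ alone already exhibits an odd-sized block $X$ of type~\eqref{jte} as a direct summand of $A$ for congruence; then $X$ is automatically a direct summand of $M$ for congruence by transitivity ($M$ congruent to $A\oplus B$ congruent to $X\oplus Y\oplus B$). There is no need to match this against the canonical decomposition of $M$ itself, so the uniqueness half of \cite[Theorem~3]{ser_izv} plays no role here---which is consistent with the fact that part~(a) as stated only asserts existence.
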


\begin{proof} (a)
This statement is the existence part of
Theorem 3 in \cite{ser_izv} (also
presented in \cite[Theorem
2.2]{h-s_anyfield}), in which a
canonical form of a matrix for
congruence over $\mathbb F$ is given up
to classification of Hermitian forms
over finite extensions of $\mathbb F$.
The canonical block $J_{2m}(0)$ is used
in \cite{ser_izv} instead of $[J_m(0)
\diagdown I_m]$, but the proof of
Theorem 3 in \cite{ser_izv} shows that
these two matrices are congruent.

(b) The ``if'' implication follows
directly from Theorem \ref{thes}. Let
us prove the ``only if'' implication.
If $M \notin \Xi_n(\mathbb F)$, Theorem
\ref{thes} ensures that $M$ is
congruent to $A \oplus B$, in which $A$
is square and has odd size. Part (a)
ensures that $A$ is congruent to a
direct sum of matrices of the form
(\ref{jte}), not all of which have even
size. Thus, $A$ (and hence also $M$)
has a direct summand for congruence
that is either $J_s(0)$ with $s$ odd,
or a nonsingular matrix $Q$ of odd size
such that  $Q^{-T}Q$ is similar to a
Frobenius block ${\Phi}_{p^l}$ of odd
size. Lemma \ref{new_lemma} ensures
that $p(x) = x-1$, so $Q^{-T}Q$ is
similar to $\Phi_{(x-1 )^r}$, which is
similar to $J_r(1)$.
\end{proof}

If $\mathbb F$ is algebraically closed,
then Theorem \ref{th2.1} can be
simplified as follows.

\begin{theorem}\label{th2.1a}
Let $M$ be a square matrix over an
{algebraically closed} field of
characteristic different from $2$. Then
\begin{itemize}
  \item[\rm(a)] $M$ is
congruent to a direct sum of matrices
of the form
\begin{equation}\label{jke}
[J_m(\lambda ) \diagdown I_m], \qquad
\Gamma_r,\qquad
 J_s(0),
\end{equation}
in which $\lambda\ne (-1)^{m+1}$, each
nonzero $\lambda$ is determined up to
replacement by $\lambda^{-1}$,
$\Gamma_r$ is defined in \eqref{1aa},
and $s$ is odd. This direct sum is
uniquely determined by $M$, up to
permutation of summands.

  \item[\rm(b)] $M\notin
\Xi_n(\mathbb F)$ if and only if $M$
has a direct summand for congruence of
the form $\Gamma_r$ with odd $r$ or
$J_s(0)$ with odd $s$.

\end{itemize}
\end{theorem}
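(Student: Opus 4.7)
My plan is to deduce Theorem~\ref{th2.1a} from Theorem~\ref{th2.1} by specializing the canonical forms to the algebraically closed setting. For part~(a), every irreducible polynomial over an algebraically closed $\mathbb F$ has the form $p(x)=x-\lambda$, so each Frobenius block $\Phi_{p^l}$ in the summand $[\Phi_{p^l}\diagdown I_m]$ of \eqref{jte} is similar to the Jordan block $J_m(\lambda)$ with $m=l$. A short direct computation (conjugating by $\mathrm{diag}(S^{-T},S)$) shows that if $A$ and $B$ are similar then $[A\diagdown I_m]$ and $[B\diagdown I_m]$ are congruent, so we can replace each Frobenius block by the corresponding Jordan block.

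Next I translate the non-cosquare condition from Lemma~\ref{new_lemma}(a). With $p(x)=x-\lambda$, the condition $p(x)=p^\vee(x)$ becomes $\lambda=\lambda^{-1}$, so $\Phi_{p^l}$ fails to be a cosquare precisely when $\lambda=0$, or $\lambda=(-1)^m$, or $\lambda\neq\pm1$. Case analysis on the parity of $m$ collapses this disjunction to the single condition $\lambda\neq(-1)^{m+1}$ used in \eqref{jke}. The complementary cosquare case forces $\lambda=(-1)^{m+1}$, and \eqref{jov} exhibits $\Gamma_r$ as one legitimate choice of $\sqrt[T]{J_r((-1)^{r+1})}$; to replace the abstract $Q$ of Theorem~\ref{th2.1}(a) by the concrete $\Gamma_r$, I invoke the fact that over an algebraically closed field the classification of Hermitian forms over finite extensions (the only remaining ambiguity in Theorem~3 of \cite{ser_izv}) is trivial, so any two cosquare roots of a given cosquare are themselves congruent. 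The $\lambda\leftrightarrow\lambda^{-1}$ ambiguity in $[J_m(\lambda)\diagdown I_m]$ I establish by direct congruence: $[A\diagdown I_m]$ is congruent to $[I_m\diagdown A]$ via the block row-column swap, and then to $[A^{-T}\diagdown I_m]$ via $\mathrm{diag}(I,A^{-T})$; combined with the similarity $A^{-T}\sim A^{-1}$ and the first paragraph, this gives $[J_m(\lambda)\diagdown I_m]\cong[J_m(\lambda^{-1})\diagdown I_m]$. Uniqueness of the decomposition modulo permutation and this $\lambda\leftrightarrow\lambda^{-1}$ identification is inherited from the uniqueness assertion in Theorem~3 of \cite{ser_izv}.

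Part~(b) then falls out of Theorem~\ref{th2.1}(b). For the ``if'' direction, \eqref{jov} gives $\Gamma_r^{-T}\Gamma_r\sim J_r(1)$ when $r$ is odd, which is exactly the hypothesis under which Theorem~\ref{th2.1}(b) concludes $M\notin\Xi_n(\mathbb F)$; and a $J_s(0)$ summand with odd $s$ is already in the form supplied by Theorem~\ref{th2.1}(b). For the ``only if'' direction, Theorem~\ref{th2.1}(b) furnishes either a $J_s(0)$ with odd $s$ (done) or a nonsingular $Q$ of odd size $r$ with $Q^{-T}Q\sim J_r(1)$; applying part~(a) of the present theorem to $Q$ and noting that a single Jordan block cosquare is incompatible with any $[J_m(\lambda)\diagdown I_m]$ summand (whose cosquare is $J_m(\lambda)\oplus J_m(\lambda)^{-T}$, contributing two blocks) and with any $J_s(0)$ summand (forcing singularity), I conclude that the decomposition of $Q$ consists of a single $\Gamma_r$ with $r$ odd.

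The step I expect to require the most care is the upgrade from an arbitrary cosquare root $Q$ to the specific canonical matrix $\Gamma_r$: this is where the ``algebraically closed'' hypothesis does genuine work, and the cleanest justification is to cite the existence/uniqueness statement of Theorem~3 in \cite{ser_izv} specialized to the case in which the Hermitian-form parameter is absent, rather than attempting an ad hoc congruence construction between two given cosquare roots of $J_r((-1)^{r+1})$.
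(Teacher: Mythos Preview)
Your proposal is correct, and the route for part~(b) is essentially the paper's: the paper writes only ``This statement follows from (a) and Theorem~\ref{thes},'' meaning that once the canonical form~\eqref{jke} is in hand, the odd-size summands are visibly exactly the $\Gamma_r$ with $r$ odd and the $J_s(0)$ with $s$ odd. Your detour through Theorem~\ref{th2.1}(b) and an analysis of the canonical form of $Q$ reaches the same conclusion with a little extra work; the paper's one-line version is cleaner because it appeals directly to Theorem~\ref{thes} and inspects block parities.

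Where your proposal genuinely differs is part~(a). The paper does not derive \eqref{jke} at all; it simply cites \cite[Theorem~2.1(a)]{h-s_anyfield} (and \cite{hor-ser,hor-ser_can}) for the algebraically closed canonical form, including uniqueness. You instead specialize Theorem~\ref{th2.1}(a) by reducing $\Phi_{p^l}$ to $J_m(\lambda)$, translating the cosquare condition of Lemma~\ref{new_lemma}(a) into $\lambda\ne(-1)^{m+1}$, exhibiting the $\lambda\leftrightarrow\lambda^{-1}$ congruence, and justifying the replacement of an abstract cosquare root $Q$ by $\Gamma_r$ via the collapse of the Hermitian-form parameter over an algebraically closed field. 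This derivation is sound and is exactly what underlies \cite[Theorem~2.1(a)]{h-s_anyfield}, so you are in effect reproving the cited result rather than quoting it. The advantage of your approach is self-containment: a reader sees why the algebraically closed hypothesis matters (it kills both the Frobenius/Jordan distinction and the Hermitian-form ambiguity). The advantage of the paper's approach is brevity, since the specialized canonical form is already in the literature.
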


\begin{proof}
(a) This canonical form for congruence
was obtained in \cite[Theorem
2.1(a)]{h-s_anyfield}; see also
\cite{hor-ser,hor-ser_can}.

(b) This statement follows from (a) and
Theorem \ref{thes}.
\end{proof}

The equivalence
(i)\,$\Leftrightarrow$\,(ii) in Theorem
\ref{th3} follows from Theorems
\ref{th2.1} and \ref{th2.1a}. (The
equivalence
(i)\,$\Leftrightarrow$\,(ii) in Theorem
\ref{th2} is another form of Theorem
\ref{thes}.)

\subsection{Canonical form
of a selfadjoint matrix pair for
equivalence}\label{subs2}

Kronecker's theorem for matrix pencils
\cite[Chapter 12]{gan} ensures that
each matrix pair $(A,B)$ over $\mathbb
C$ is equivalent to a direct sum of
pairs of the form
\[
(I_m,J_m(\lambda )),\quad
(J_r(0),I_r),\quad (F_s,G_s),\quad
(F_t^T,G_t^T),
\]
in which $F_s$ and $G_s$ are defined in
\eqref{kut}. This direct sum is
uniquely determined by $(A,B)$, up to
permutations of summands. Over a field
$\mathbb F$ of characteristic not $2$,
this canonical form with Frobenius
blocks ${\Phi_{p^l}}$ (see \eqref{3})
instead of Jordan blocks $J_m(\lambda
)$ can be constructed in two steps:
\begin{itemize}
  \item
Use Van Dooren's regularization
algorithm \cite{doo} for matrix pencils
(which was extended to matrices of
cycles of linear mappings in
\cite{vvs2004} and to matrices of
bilinear forms in \cite{h-s_regul}) to
transform $(A,B)$ to an equivalent pair
that is a direct sum of the
\emph{regular part} $(I_k,R)$ with
nonsingular $R$ and canonical pairs of
the form $(J_r(0),I_r)$, $(F_s,G_s)$, and
$(F_t^T,G_t^T)$.

  \item
Reduce $R$ to a direct sum of Frobenius
blocks ${\Phi_{p^l}}$ by a similarity
transformation $S^{-1}RS$; the
corresponding similarity transformation
$ S^{-1}(I_k,R)S=(I_k,S^{-1}RS)$
decomposes the regular part into a
direct sum of canonical blocks
$(I_m,{\Phi}_{p^l})$.
\end{itemize}

\begin{theorem}\label{th2.2}
Let $M$ be a square matrix over a field
$\mathbb F$ of characteristic different
from $2$.

\begin{itemize}
  \item[\rm(a)] The
selfadjoint pair $(M^T,M)$ is
equivalent to a direct sum of
selfadjoint pairs of the form
\begin{equation}\label{jke2}
([I_m\diagdown\Phi_{p^l}^T],
[\Phi_{p^l}\diagdown I_m]), \quad
(\sqrt[T]{\Phi_{q^r}}^T,
\sqrt[T]{\Phi_{q^r}}\,), \quad
(J_s(0)^T,J_s(0)),
\end{equation}
in which ${\Phi_{p^l}}$ is an $m\times
m$ Frobenius block that is not a
cosquare, $\Phi_{q^r}$ is a Frobenius
block that is a cosquare, and $s$ is
odd. This direct sum is uniquely
determined by $M$, up to permutations
of direct summands and replacement, for
each $\Phi_{p^l}$, of any number of
summands of the form
$([I_m\diagdown\Phi_{p^l}^T],
[\Phi_{p^l}\diagdown I_m])$ by
$([I_m\diagdown\Phi_{q^l}^T],
[\Phi_{q^l}\diagdown I_m])$, in which
$q(x):=p^{\vee}(x)$ is defined in
\eqref{utz}.

  \item[\rm(b)] The following
three conditions are equivalent:
\begin{itemize}
  \item[\rm(i)]
$M\notin\Xi_n({\mathbb F})$;
  \item[\rm(ii)]
$(M^T,M)$ has a selfadjoint direct
summand for equivalence of the form
$(\Gamma_r^T, \Gamma_r)$ with odd $r$,
or $(J_s(0)^T,J_s(0))$ with odd $s$;

  \item[\rm(iii)]
$(M^T,M)$ has a direct summand for
equivalence of the form $(I_r,J_r(1))$
with odd $r$, or $(F_t,G_t)$ with any
$t$.
\end{itemize}
\end{itemize}
\end{theorem}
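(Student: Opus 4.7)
The plan is to deduce the theorem from Theorem \ref{th2.1} (the congruence canonical form) together with Kronecker's canonical form of matrix pencils recalled in Section \ref{subs2}. The bridge is the elementary observation that a congruence $S^T M S = M'$ is simultaneously an equivalence $S^T(M^T, M)S = ((M')^T, M')$ of selfadjoint pairs; thus the three types of congruence summands of $M$ listed in \eqref{jte} produce the three types of selfadjoint equivalence summands of $(M^T, M)$ listed in \eqref{jke2}. For the first and third types the computation is transparent by transposing blocks. For the second type, where $C = Q$ is nonsingular with cosquare $Q^{-T}Q$ similar to $\Phi_{q^r}$, I would chain equivalences
\[(Q^T, Q) \sim (I, Q^{-T}Q) \sim (I, \Phi_{q^r}) \sim \bigl((\sqrt[T]{\Phi_{q^r}}\,)^{T},\, \sqrt[T]{\Phi_{q^r}}\,\bigr),\]
the outer steps being left-multiplications by nonsingular matrices and the middle step a similarity applied to the right factor.

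For the uniqueness half of (a), I would compute the Kronecker form of each summand in \eqref{jke2} and invoke Kronecker's uniqueness theorem. A block-row swap shows that the pencil of the first pair is strictly equivalent to $(\Phi_{p^l} - xI_m) \oplus (I_m - x\Phi_{p^l}^T)$, which yields Kronecker blocks $(I_m, \Phi_{p^l}) \oplus (I_m, \Phi_{(p^\vee)^l})$ when $p(0) \ne 0$, and $(I_l, J_l(0)) \oplus (J_l(0), I_l)$ when $p(x) = x$. The second pair reduces to the single Kronecker block $(I_m, \Phi_{q^r})$ by the same chain used in the existence step. The third pair, with odd $s$, is a square singular pencil that I would reduce by row and column operations (or via Van Dooren's regularization as in Section \ref{subs2}) to the Kronecker form $(F_t, G_t) \oplus (F_t^T, G_t^T)$ with $t = (s+1)/2$. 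These three contributions land in essentially disjoint parts of the Kronecker catalogue, apart from the symmetry $p \leftrightarrow p^\vee$ (which produces the same pair of finite-divisor blocks and hence the stated ambiguity).

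Part (b) then follows by combining Theorem \ref{th2.1}(b) with the two canonical-form correspondences above. For (i)$\,\Leftrightarrow\,$(ii): by Theorem \ref{th2.1}(b), $M \notin \Xi_n(\mathbb F)$ is equivalent to $M$ having a congruence summand $J_s(0)$ with odd $s$ or a nonsingular $Q$ of odd size with $Q^{-T}Q$ similar to $J_r(1)$, which by the existence half of (a) together with \eqref{jov} ($\Gamma_r^{-T}\Gamma_r$ similar to $J_r(1)$ for odd $r$) gives the selfadjoint summand $(\Gamma_r^T, \Gamma_r)$ or $(J_s(0)^T, J_s(0))$ of $(M^T, M)$. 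For (ii)$\,\Leftrightarrow\,$(iii), the Kronecker computations above show that $(\Gamma_r^T, \Gamma_r)$ with odd $r$ has Kronecker summand $(I_r, J_r(1))$ and $(J_s(0)^T, J_s(0))$ with odd $s$ has Kronecker summand $(F_t, G_t)$; conversely, Lemma \ref{new_lemma}(b) (forcing $p(x) = x-1$ for odd-size cosquare Frobenius blocks) guarantees that an odd-indexed block $(I_r, J_r(1))$ or any block $(F_t, G_t)$ in the Kronecker form of $(M^T, M)$ can only arise from a type-2 or type-3 summand of $M$, since the type-1 contributions from $p=x-1$ require $l$ even (by the exclusion $p \ne x + (-1)^{m+1}$) and thus produce only even-indexed blocks, and type-1 with $p=x$ gives an infinite-divisor block rather than an $(F_t, G_t)$.

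The main technical obstacle will be the singular-pencil calculation showing that $(J_s(0)^T, J_s(0))$ with $s$ odd has Kronecker form $(F_t, G_t) \oplus (F_t^T, G_t^T)$ with $t = (s+1)/2$; with this settled, the rest is bookkeeping. A secondary delicacy is the parity argument in the last paragraph: verifying that type-1 summands of $M$ with $p = x-1$ never create a single $(I_r, J_r(1))$ block of odd index. This is exactly where the exclusion $p \ne x + (-1)^{m+1}$ in the cosquare criterion of Lemma \ref{new_lemma}(a) plays its role, ensuring the odd-index Kronecker data of $(M^T, M)$ is carried entirely by the type-2 summands corresponding to the odd-size nonsingular congruence summands of $M$.
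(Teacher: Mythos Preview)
Your plan is correct and follows essentially the same route as the paper: derive the existence part of (a) from Theorem~\ref{th2.1}(a) via the congruence-to-equivalence bridge, establish uniqueness by computing the Kronecker form of each of the three summand types (the paper records exactly your four reductions, including $(J_{2t-1}(0)^T,J_{2t-1}(0))\sim (F_t,G_t)\oplus (G_t^T,F_t^T)$), and close part (b) by the cycle (i)$\Rightarrow$(ii)$\Rightarrow$(iii)$\Rightarrow$(i) using Theorem~\ref{th2.1}(b), \eqref{jov}, and the Kronecker uniqueness together with the cosquare parity observation. Your parity discussion for (iii)$\Rightarrow$(i) is slightly more explicit than the paper's, but it is the same argument: the paper phrases it as ``$J_r(1)$ with odd $r$ is a cosquare, so $N_i$ must be of type $Q$,'' which is exactly your exclusion of type-1 summands with $p=x-1$ and odd $l$.
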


\begin{proof} Let $M$ be a square
matrix over a field $\mathbb F$ of
characteristic different from $2$.

(a) By Theorem \ref{th2.1}(a), $M$ is
congruent to a direct sum $N$ of
matrices of the form \eqref{jte}.
Hence, $(M^T,M)$ is equivalent to
$(N^T,N)$, a direct sum of pairs of the
form \eqref{jke2}.

Uniqueness of this direct sum follows
from the uniqueness assertion in
Kronecker's theorem and the following
four equivalences:

1. $([I_m\diagdown\Phi_{p(x)^l}^T],
[\Phi_{p(x)^l}\diagdown I_m])$ is
equivalent to $(I_m,
\Phi_{p(x)^l})\oplus (I_m,
\Phi_{p^{\vee}(x)^l})$ for each
irreducible polynomial $p(x)\ne x$.

2. $([I_m\diagdown J_m(0)^T],
[J_m(0)\diagdown I_m])$ is equivalent
to $(I_m, J_m(0))\oplus (J_m(0),I_m)$.

3. $(\sqrt[T]{\Phi_{q^r}}^T,
\sqrt[T]{\Phi_{q^r}}\,)$ is equivalent
to $(I,\Phi_{q^r})$.

4. $(J_{2t-1}(0)^T,J_{2t-1}(0))$ is
equivalent to $(F_t^T,G_t^T)\oplus
(G_t,F_t)$.

 To verify the first equivalence, observe that
$(\Phi_{p(x)^l}^{T},I_m)$ is equivalent
to $(I_m,\Phi_{p^{\vee}(x)^l})$ because
\begin{equation}\label{hsi}
\Phi_{p(x)^l}^{-T}\quad \text{is
similar to}\quad \Phi_{p^{\vee}(x)^l}
\end{equation}
for each nonsingular $m\times m$
Frobenius block $\Phi:=\Phi_{p(x)^l}$.
The similarity (\ref{hsi}) follows from
the fact that the characteristic
polynomials of $\Phi^{-T}$ and
$\Phi_{p^{\vee}(x)^l}$ are equal:
\begin{align*}
\chi_{\Phi^{-T}}(x) &=
\det(xI-\Phi^{-1})= \det((-\Phi^{-1})(I
-x\Phi))
\\&=\det(-\Phi^{-1})\cdot
x^m\cdot \det(x^{-1}I
-\Phi)=\chi_{\Phi}^{\vee}(x) =
(p(x)^{l})^{\vee},
\end{align*}
which equals $p^{\vee}(x)^{l}$ by
(\ref{check}).

The second equivalence is obvious.

To verify the third equivalence,
compute
\begin{equation*}
\text{$\sqrt[T]{\Phi_{q^r}}^{-T}(\sqrt[T]{\Phi_{q^r}}^T,
\sqrt[T]{\Phi_{q^r}}\,)I
=(I,\Phi_{q^r})$.}
\end{equation*}

The matrix pairs in the fourth
equivalence are permutationally
equivalent.

(b) ``(i)\,$\Rightarrow$\,(ii)''
Suppose that $M\notin\Xi_n({\mathbb
F})$. By Theorem \ref{th2.1}(b), $M$
has a direct summand  $Q$ for
congruence such that $Q^{-T}Q$ is
similar to $J_r(1)$ with odd $r$, or a
direct summand $J_s(0)$ with odd $s$.
Then $(Q^T, Q)$ or $(J_s(0)^T,J_s(0))$
is a direct summand of $(M^T,M)$ for
equivalence. The pair $(Q^T, Q)$ is
equivalent to $(\Gamma_r^T, \Gamma_r)$
since $Q^{-T}Q$ and $\Gamma_r^{-T}
\Gamma_r$ are similar (they are similar
to $J_r(1)$ by \eqref{jov}) and because
\[
S^{-1}Q^{-T}QS =\Gamma_r^{-T}\Gamma_r
\quad \Longrightarrow\quad
\Gamma_r^TS^{-1}Q^{-T}(Q^T,Q)S
=(\Gamma_r^T,\Gamma_r).
\]

``(ii)\,$\Rightarrow$\,(iii)'' To prove
this implication, observe that
$(\Gamma_r^T, \Gamma_r)$ with odd $r$
is equivalent to $(I_r,\Gamma_r^{-T}
\Gamma_r)$, which is equivalent to
$(I_r,J_r(1))$ by \eqref{jov}, and
\cite[p.\,213]{h-s_anyfield} ensures
that
\begin{equation}\label{jts}
(J_{2t-1}(0)^T,J_{2t-1}(0))\quad
\text{is equivalent to}\quad
(F_t,G_t)\oplus (G_t^T,F_t^T).
\end{equation}

``(iii)\,$\Rightarrow$\,(i)'' Assume
the assertion in (iii). By Theorem
\ref{th2.1}(a), $M$ is congruent to a
direct sum $N=\oplus_i N_i$ of matrices
of the form \eqref{jte}. Then $(M^T,M)$
is equivalent to $(N^T,N)=\oplus_i
(N_i^T,N_i)$. By (iii) and the
uniqueness assertion in Kronecker's
theorem, some $(N_i^T,N_i)$ has a
direct summand for equivalence of the
form $(I_r,J_r(1))$ with odd $r$ or
$(F_t,G_t)$ with any $t$.
\begin{itemize}
  \item
Suppose that the direct summand is
$(I_r,J_r(1))$ with odd $r$. Since
$N_i$ is one of the matrices
\eqref{jte} and $J_r(1)$ with odd $r$
is a cosquare by \eqref{eeq2}, it
follows that $N_i=Q$ and $Q^{-T}Q$ is
similar to $J_r(1)$.

  \item
Suppose that the direct summand is
$(F_t,G_t)$. Since $N_i$ is one of the
matrices \eqref{jte}, \eqref{jts}
ensures that $N_i=J_{2t-1}(0)$.
\end{itemize}
In both the preceding cases, $N_i$ has
odd size, so Theorem \ref{thes} ensures
that $M\notin\Xi_n({\mathbb F})$.
\end{proof}

The equivalences
(i)\,$\Leftrightarrow$\,(iii) in
Theorems \ref{th2} and \ref{th3} follow
from Theorem \ref{th2.2}.

\subsection{Canonical form
of a cosquare for
similarity}\label{subs3}

\begin{theorem}\label{th2.3}
Let $M$ be a nonsingular matrix over a
field $\mathbb F$ of characteristic
different from $2$.

\begin{itemize}
  \item[\rm(a)]  The cosquare
$M^{-T}M$ is similar to a direct sum of
cosquares
\begin{equation}\label{uds}
\Phi_{p^l}\oplus \Phi_{p^l}^{-T},\qquad
\Phi_{q^r},
\end{equation}
in which ${\Phi_{p^l}}$ is a
nonsingular Frobenius block that is not
a cosquare and ${\Phi_{q^r}}$ is a
Frobenius block that is a cosquare.
This direct sum is uniquely determined
by $M$, up to permutation of direct
summands and replacement, for each
$\Phi_{p^l}$, of any number of summands
of the form $\Phi_{p^l}\oplus
\Phi_{p^l}^{-T}$ by $\Phi_{q^l}\oplus
\Phi_{q^l}^{-T}$, in which
$q(x):=p^{\vee}(x)$ is defined in
\eqref{utz}.

  \item[\rm(b)]
$M\notin\Xi_n(\mathbb F)$ if and only
if $M^{-T}M$ has a direct summand for
similarity of the form $J_r(1)$ with
odd $r$.
\end{itemize}
\end{theorem}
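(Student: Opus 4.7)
The plan is to deduce both parts from Theorem \ref{th2.1} by computing cosquares of the congruence canonical summands, and then using the uniqueness of the Frobenius canonical form to control what happens under similarity.

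For part (a), I would begin by applying Theorem \ref{th2.1}(a): since $M$ is nonsingular, it is congruent to a direct sum of matrices of the two forms $[\Phi_{p^l}\diagdown I_m]$ (with $\Phi_{p^l}$ a non-cosquare Frobenius block) and $Q$ (with $Q^{-T}Q$ similar to a cosquare Frobenius block $\Phi_{q^r}$); the $J_s(0)$ summands cannot occur. A direct block computation shows
\[
[\Phi_{p^l}\diagdown I_m]^{-T}[\Phi_{p^l}\diagdown I_m]=
\begin{bmatrix}0&I\\ \Phi_{p^l}^{-T}&0\end{bmatrix}
\begin{bmatrix}0&I\\ \Phi_{p^l}&0\end{bmatrix}
=\Phi_{p^l}\oplus \Phi_{p^l}^{-T},
\]
and the cosquare of $Q$ is $\Phi_{q^r}$ by construction. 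Since congruence of $M$ yields similarity of $M^{-T}M$ by \eqref{dot}, this produces the required direct sum decomposition. For the uniqueness part, I would use the fact that the Frobenius canonical form of $M^{-T}M$ is unique and appeal to \eqref{hsi} to note that $\Phi_{p^l}^{-T}$ is similar to $\Phi_{(p^\vee)^l}$; thus each non-cosquare pair is determined by an unordered pair $\{p,p^\vee\}$, and writing it using either representative produces the stated ambiguity, while cosquare blocks $\Phi_{q^r}$ (those for which $q=q^\vee$ and the other conditions of \eqref{eeq2} hold) stand alone.

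For part (b), the forward direction is easy: if $M\notin\Xi_n(\mathbb F)$, Theorem \ref{th2.1}(b) gives a congruence summand $Q$ of $M$ with $Q^{-T}Q$ similar to $J_r(1)$, $r$ odd (the $J_s(0)$ option is excluded by nonsingularity), and then \eqref{dot} hands over a similarity summand of $M^{-T}M$ of the required form. For the reverse direction I would use the uniqueness analysis of part (a). If $J_r(1)$ with $r$ odd is a similarity summand of $M^{-T}M$, then the Frobenius block $\Phi_{(x-1)^r}$ must appear in the Frobenius canonical form of $M^{-T}M$ and hence must arise from one of the summands in the decomposition of part (a). The key observation is that $(x-1)^\vee=x-1$, and that for $p(x)=x-1$ with $r$ odd we have $m=r$ odd so $(-1)^{m+1}=1$ and $p(x)\ne x+(-1)^{m+1}$, so condition \eqref{eeq2} is satisfied, meaning $\Phi_{(x-1)^r}$ \emph{is} a cosquare. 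Therefore it cannot occur in a non-cosquare pair $\Phi_{p^l}\oplus \Phi_{p^l}^{-T}$ and must correspond to a $Q$-summand with $Q^{-T}Q\sim J_r(1)$, yielding $M\notin\Xi_n(\mathbb F)$ by Theorem \ref{th2.1}(b).

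The only genuinely delicate point is the reverse direction of (b): one must rule out the possibility that $J_r(1)$ with $r$ odd is contributed by a non-cosquare pair summand $[\Phi_{p^l}\diagdown I_m]$ of $M$. This is exactly where Lemma \ref{new_lemma}(b) (in the guise of the cosquare condition \eqref{eeq2} for $p(x)=x-1$) is needed: it forces any such block to in fact be a cosquare in disguise, contradicting its alleged non-cosquare status. Once this obstacle is clarified, the remaining steps are routine bookkeeping.
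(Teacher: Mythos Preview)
Your proof of part (a) is correct and matches the paper's argument exactly: apply Theorem~\ref{th2.1}(a), compute cosquares of the summands, and invoke uniqueness of the Frobenius canonical form together with \eqref{hsi}.

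For part (b), your argument is correct but differs from the paper's route. The paper proves (b) in one line by quoting Theorem~\ref{th2.2}(b): for nonsingular $M$, the pair $(M^T,M)$ is equivalent to $(I_n,M^{-T}M)$, so a direct summand $(I_r,J_r(1))$ for equivalence corresponds exactly to a direct summand $J_r(1)$ for similarity. You instead bypass Theorem~\ref{th2.2} entirely and argue directly from Theorem~\ref{th2.1}(b) and the Frobenius canonical form: the key step is your observation that $\Phi_{(x-1)^r}$ with $r$ odd satisfies \eqref{eeq2} and hence is a cosquare, so it cannot appear as either half of a non-cosquare pair $\Phi_{p^l}\oplus\Phi_{p^l}^{-T}$ in the decomposition of part (a), forcing it to come from a $Q$-summand of $M$. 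This is a valid and self-contained argument; the trade-off is that the paper's version is shorter because the work has already been packaged into Theorem~\ref{th2.2}, whereas yours re-derives the needed piece of that analysis on the spot but does not rely on the pair machinery.
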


\begin{proof}
(a) The existence of this direct sum
follows from Theorem \ref{th2.1}(a)
since $M$ is congruent to a direct sum
of nonsingular matrices $[\Phi_{p^l}
\diagdown I_m]$ and $Q$ (see
\eqref{jte}); the matrices \eqref{uds}
are their cosquares. The uniqueness
assertion follows from uniqueness of
the Frobenius canonical form and
\eqref{hsi}.

(b) By Theorem \ref{th2.2}(b) and
because $M$ is nonsingular,
$M\notin\Xi_n({\mathbb F})$ if and only
if $(M^T,M)$ has a direct summand for
equivalence of the form $(I_r,J_r(1))$
with odd $r$. This implies (b) since
$(M^T,M)$ is equivalent to
$(I_n,M^{-T}M)$.
\end{proof}

The equivalences
(i)\,$\Leftrightarrow$\,(iv) in
Theorems \ref{th2} and \ref{th3} follow
from Theorem \ref{th2.3}.

\subsection{An algorithm}\label{algorithm}

The following simple condition is sufficient to ensure
that $M\in \Xi_n(\mathbb F)$.

\begin{lemma}[{\cite[Theorem 2.3]{c-d-j}}
for $\mathbb {F=R}$ or $\mathbb C$]
\label{theor} Let $\mathbb F$ be a
field of characteristic different from
$2$. If $M \in M_n(\mathbb F)$ and if
its skew-symmetric part $M_w = (M -
M^T)/2$ is nonsingular, then $M\in
\Xi_n(\mathbb F)$.
\end{lemma}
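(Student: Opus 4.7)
The plan is to reduce the problem to the known fact, stated in the introduction of the paper, that every isometry of a symplectic space has determinant $1$ (\cite[Theorem 3.25]{Artin}).

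First I would use the characteristic-not-$2$ hypothesis to write the standard decomposition $M = M_s + M_w$ into its symmetric part $M_s := (M+M^T)/2$ and skew-symmetric part $M_w := (M-M^T)/2$. If $S$ is nonsingular and $S^T M S = M$, then transposing also gives $S^T M^T S = M^T$, so by adding and subtracting and dividing by $2$ we get
\[
S^T M_s S = M_s, \qquad S^T M_w S = M_w.
\]
Thus any matrix $S$ that ``preserves'' $M$ automatically preserves the skew-symmetric part $M_w$.

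Next, since $M_w$ is assumed nonsingular and skew-symmetric, the form it defines on $\mathbb F^n$ is nondegenerate and skew-symmetric, i.e., $\mathbb F^n$ equipped with $M_w$ is a symplectic space (in particular $n$ is necessarily even). The identity $S^T M_w S = M_w$ is exactly the condition that $S$ is an isometry of this symplectic space. By the classical result cited in the introduction, every such isometry has determinant $1$, so $\det S = 1$. This shows that the implication \eqref{lir1} holds for $M$, i.e., $M \in \Xi_n(\mathbb F)$.

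There is no serious obstacle: the only subtle point is the orthogonal splitting of the condition $S^T M S = M$ into its symmetric and skew-symmetric parts, which requires characteristic $\ne 2$ so that $M_s$ and $M_w$ are well-defined. Once that splitting is noted, the conclusion follows immediately from the symplectic case. (No appeal to Theorem \ref{thes} or to canonical forms is needed.)
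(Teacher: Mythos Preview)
Your argument is correct and follows essentially the same route as the paper: from $S^TMS=M$ one deduces $S^TM_wS=M_w$, and then the classical fact that every isometry of a nondegenerate skew-symmetric form has determinant~$1$ (Artin's theorem) finishes the proof. The only cosmetic difference is that the paper makes the appeal to Artin explicit by first writing $M_w=C^TZ_{2m}C$ and observing that $CSC^{-1}$ is a symplectic matrix in the standard sense, whereas you invoke the symplectic-isometry result directly for the form $M_w$.
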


\begin{proof}
Since $M_w$ is skew-symmetric and
nonsingular, there exists a nonsingular
$C$ such that $M_w=C^TZ_{2m}C$, in
which $Z_{2m}$ is defined in
\eqref{hys}. If $S^TMS=M$, then
\[S^TM_wS=M_w,\qquad
(CSC^{-1})^TZ_{2m} (CSC^{-1})=Z_{2m},\]
and so $CSC^{-1}$ is symplectic. By
\cite[Theorem 3.25]{Artin}, $\det
CSC^{-1}=1$, which implies that $\det
S=1$.
\end{proof}

Independent of any condition on $M_w$,
one can use the regularization algorithm
described in \cite{h-s_regul} to reduce
$M$ by a sequence of congruences
(simple row and column operations) to
the form
\begin{equation}\label{star}
B \oplus J_{n_1} (0) \oplus \cdots
\oplus J_{n_p} (0), \text{ $B$
nonsingular and } 1 \le n_1 \le \cdots
\le n_p.
\end{equation}
Of course,
the singular blocks are absent and $B =
M$ if $M$ is nonsingular.

According to Theorem \ref{th2.3}(b), the only
information needed about $B$ in (\ref{star}) is
whether it has any Jordan blocks $J_r(1)$ with odd $r$.
Let $r_k = \operatorname{rank}(B^{-T}B-I)^k$ and set $r_0 = n$.
For each $k=1,\ldots,n$, $B^{-T}B$ has $r_{k-1}-r_k$
blocks $J_j(1)$ of all sizes $j \geq k$ and exactly
$(r_{2k}-r_{2k+1})-(r_{2k+1}-r_{2k+2}) = r_{2k}-2r_{2k+1}+r_{2k+2}$
blocks of the form $J_{2k+1}(1)$ for each $k=0,1,\ldots,[\frac{n-1}{2}]$.

The preceding observations lead to the following algorithm to determine whether
a given $M \in M_n(\mathbb F)$ is in $\Xi_n(\mathbb F)$:
\begin{itemize}
\item[\rm1.]
If $M - M^T$ is nonsingular, then stop: $M \in \Xi_n(\mathbb F)$.
\item[\rm2.]
If $M$ is singular, use the regularization algorithm \cite{h-s_regul} to determine
a direct sum of the form (\ref{star}) to which $M$ is congruent,
and examine the singular block sizes $n_j$.
If any $n_j$ is odd, then stop: $M \notin \Xi_n(\mathbb F)$.
\item[\rm3.]
If $M$ is nonsingular or if all $n_j$ are even, then $M \in \Xi_n(\mathbb F)$
if and only if $r_{2k}-2r_{2k+1}+r_{2k+2}=0$ for all $k=0,1,\ldots,[\frac{n-1}{2}]$.
\end{itemize}

Notice that if $M-M^T$ is nonsingular, then (a) no $n_j$ is odd
since $J_r(0)-J_r(0)^T$ is singular for every odd $r$,
(b) $B-B^T$ is nonsingular, and
(c) $\operatorname{rank}(B^{-T}B-I)=\operatorname{rank}(B^{-T}(B-B^T))=n$,
so $r_k=n$ for all $k=1,2,\ldots$ and
$r_{2k}-2r_{2k+1}+r_{2k+2}=0$ for all $k=0,1,\ldots$.

\section{Proof of Theorem
\ref{thes}}\label{sec_pr}

The implication
(i)\,$\Rightarrow$\,(ii) of Theorem
\ref{thes} was established in Section
\ref{intr}. In this section we prove
the remaining implication
(ii)\,$\Rightarrow$\,(i): we take any
$M \in M_n(\mathbb F)$ that has no
direct summands for congruence of odd
size, and show that $M\in \Xi_n(\mathbb
F)$. We continue to assume, as in
Theorem \ref{thes}, that
 $\mathbb F$ is a
field of characteristic different from
$2$.

By \eqref{ohu} and Theorem
\ref{th2.1}(a), we can suppose that $M$
is a direct sum of matrices of even
sizes of the form $[\Phi_{p^l}
\diagdown I_m]$ and $Q$; see
\eqref{jte}. Rearranging summands, we
represent $M$ in the form
\begin{equation}\label{jyp}
M=M'\oplus M'',\qquad M'\text{ is
}{n'\times n'},\ \ M''\text{ is
}{n''\times n''},
\end{equation}
in which
\begin{itemize}
  \item[($\alpha$)]
$M'$ is the direct sum of all summands
of the form $[\Phi_{(x-1)^m} \diagdown
I_m]$ ($m$ is even by Lemma
\ref{new_lemma}(a)), and
  \item[($\beta$)]
$M''$ is the direct sum of the other
summands; they have the form
$[\Phi_{p^l} \diagdown I_m]$ with
$p(x)\ne x-1$ and $Q$ of even size, in
which ${\Phi_{p^l}}$ is an $m\times m$
Frobenius block that is not a cosquare
and $Q^{-T}Q$ is similar to a Frobenius
block.

\end{itemize}
\medskip

\noindent\emph{Step 1: Show that for
each nonsingular $S$,
\begin{equation}\label{ryo}
S^TMS=M\quad\Longrightarrow \quad
S=S'\oplus S'',\ \ S'\text{ is
}{n'\times n'},\ \ S''\text{ is
}{n''\times n''}.
\end{equation}}

If $S^TMS=M$, then
$S^T(M^T,M)S=(M^T,M)$, and so with
$R:=S^{-T}$ we have
\begin{equation}\label{kit}
(M^T,M)S=R(M^T,M).
\end{equation}
To prove \eqref{ryo}, we prove a more
general assertion: \eqref{kit} implies
that
\begin{equation}\label{geo}
S=S'\oplus S'',\ R= R'\oplus R'',\ \
S',R'\text{ are }{n'\times n'},\ \
S'',R''\text{ are }{n''\times n''}.
\end{equation}
Using Theorem \ref{th2.1a}(a), we
reduce $M'$ and $M''$ in \eqref{jyp} by
congruence transformations over the
algebraic closure $\overline {\mathbb
F}$ of ${\mathbb F}$ to direct sums of
matrices of the form $[J_m(1) \diagdown
I_m]$ and, respectively, of the form
$[J_m(\lambda) \diagdown I_m]$ with
$\lambda \ne 1$ and $\Gamma_r$ with
even $r$. Then
\begin{itemize}
  \item
$(M'^T,M')$ is equivalent over
$\overline {\mathbb F}$ to a direct sum
of pairs of the form
$(I_m,J_m(1))\oplus (J_m(1),I_m)$, and
  \item
$(M''^T,M'')$ is equivalent over
$\overline {\mathbb F}$ to a direct sum
of pairs of the form $(I_m,J_m(\lambda
))\oplus (J_m(\lambda ),I_m)$ with
$1\ne\lambda \in\overline {\mathbb F}$
and $(\Gamma_r^T,\Gamma_r)$ with even
$r$.
\end{itemize}
The pair $(J_m(1),I_m)$ is equivalent
to $(I_m,J_m(1))$. The pair
$(\Gamma_r^T,\Gamma_r)$ is equivalent
to $(I_r,\Gamma_r^{-T}\Gamma_r)$, which
is equivalent to $(I_r,J_r(-1))$ by
\eqref{jov} since $r$ is even. Thus,
\begin{itemize}
    \item[($\alpha'$)]
$(M'^T,M')$ is equivalent to a direct
sum of pairs of that are of the form
$(I_m,J_m(1))$,  and
  \item[($\beta'$)]
$(M''^T,M'')$ is equivalent to a direct
sum of pairs that are either  of the
form $(I_m,J_m(\lambda ))$ with
$\lambda \ne 1$ or of the form
$(J_m(0),I_m)$.
\end{itemize}

We choose $\gamma \in \overline
{\mathbb F}$, $\gamma  \ne -1$, such
that $M''^T+\gamma  M''$ is nonsingular
(if $M''$ is nonsingular, then we may
take $\gamma  = 0$; if $M''$ is
singular, then we may choose any
$\gamma  \ne 0,-1$ such that $(M^T,M)$
has no direct summands of the form
$(I_m,J_m(-\gamma ^{-1})$).

Then \eqref{kit} implies that
\begin{equation*}\label{keit}
(M^T+\gamma  M,M)S=R(M^T+\gamma  M,M).
\end{equation*}

The pair $(M^T+\gamma  M,M)$ is
equivalent to $(I_n,(M^T+\gamma
M)^{-1}M)$, whose Kronecker canonical
pair has the form
\[(I_n,N):=(I_{n'},N')\oplus
(I_{n''},N''),\] in which ($\alpha'$)
and ($\beta'$) ensure that
\begin{itemize}
    \item[($\alpha''$)]
$N'$ (of size $n'\times n'$) is a
direct sum of Jordan blocks with
eigenvalue $(1+\gamma  )^{-1}$, and
    \item[($\beta''$)]
$N''$ (of size $n''\times n''$) is a
direct sum of Jordan blocks with
eigenvalues distinct from $(1+\gamma
)^{-1}$.
\end{itemize}

If $(I_n,N)\tilde S=\tilde R(I_n,N)$,
then $\tilde S=\tilde R$, $N\tilde
S=\tilde SN$, and ($\alpha''$) and
($\beta''$) ensure that $\tilde
S=\tilde S'\oplus \tilde S''$, in which
$\tilde S'$ is $n'\times n'$ and
$\tilde S''$ is $n''\times n''$. Since
$(I_n,N)$ is obtained from $(M^T,M)$ by
transformations within $(M'^T,M')$ and
within $(M''^T,M'')$, \eqref{kit}
implies \eqref{geo}. This proves
\eqref{ryo}.

Since $\det S=\det S'\det S''$, it
remains to prove that
\[
M'\in \Xi_{n'}(\mathbb F),\qquad M''\in
\Xi_{n''}(\mathbb F).
\]

\noindent\emph{Step 2: Show that
$M''\in \Xi_{n''}(\mathbb F)$.}

By Lemma \ref{theor}, it suffices to
show that $2M''_w=M''-M''^T$ is
nonsingular. This assertion is correct
since ($\beta$)  ensures that the
matrix $M''$ is a direct sum of
matrices of the form $[\Phi_{p^l}
\diagdown I_m]$ with $p(x)\ne x-1$ and
$Q$ of even size, and
\begin{itemize}
  \item
for each summand of the form
$[\Phi_{p^l} \diagdown I_m]$,
\[
[\Phi_{p^l} \diagdown
I_m]_w=\begin{bmatrix}0&I_{m}-\Phi_{p^l}^T \\
\Phi_{p^l}-I_m &0
\end{bmatrix}
\]
is nonsingular since $1$ is not an
eigenvalue of $\Phi_{p^l}$;

  \item
for each summand of the form $Q$,
$Q-Q^T=Q^T(Q^{-T}Q-I_r)$ is nonsingular
since $Q^{-T}Q$ is similar to a
Frobenius block ${\Phi}_{p^l}$ of even
size, in which \eqref{eeq2} ensures
that $p(x)\ne x-1$, and so $1$ is not
an eigenvalue of $Q^{-T}Q$.
\end{itemize}

\noindent\emph{Step 3: Show that $M'\in
\Xi_{n'}(\mathbb F)$.}

By $(\alpha )$, $M'$ is a direct sum of
matrices of the form
\begin{equation}\label{tpg}
[\Phi_{(x-1)^m} \diagdown
I_m],\qquad\text{$m$ is even},
\end{equation}
in which ${\Phi_{(x-1)^m}}$ is a
Frobenius block that is not a cosquare;
\eqref{eeq2} ensures that $m$ is even.

Since $C^{-1}{\Phi_{(x-1)^m}}C= J_m(1)$
for some nonsingular $C$, each summand
$[\Phi_{(x-1)^m} \diagdown I_m]$ is
congruent to
\[
\begin{bmatrix}0&I_{m}\\
J_m(1) &0
\end{bmatrix}=
\begin{bmatrix}C^T&0\\
0 &C^{-1}
\end{bmatrix}
\begin{bmatrix}0&I_{m}\\
\Phi_{(x-1)^m} &0
\end{bmatrix}
\begin{bmatrix}C&0\\
0 &C^{-T}
\end{bmatrix},
\]
which is congruent to
\[
\begin{bmatrix}0&
\tilde I_{m}\\
\tilde J_m(1) &0
\end{bmatrix}=
\begin{bmatrix}
\tilde I_{m}&0\\
0 &I_m
\end{bmatrix}
\begin{bmatrix}0&I_{m}\\
J_m(1) &0
\end{bmatrix}
\begin{bmatrix}\tilde
I_{m}&0\\
0 &I_m
\end{bmatrix},
\]
in which
\[
\tilde I_m:=\begin{bmatrix}
0&&1\\
&\ddd&\\
1&&0
\end{bmatrix},\quad
\tilde J_m(1):=\begin{bmatrix}
0&&&1\\
&&1&1\\
&\ddd&\ddd\\
1 &1&&0
\end{bmatrix}
\text{\quad ($m$-by-$m$).}
\]
The matrix $[\tilde J_m(1) \diagdown
\tilde I_m]$ is congruent via a
permutation matrix to
\begin{equation}\label{jos}
\begin{bmatrix}
0&&&K_2\\
&&K_2&L_2\\
&\ddd&\ddd\\
K_2 &L_2&&0
\end{bmatrix},\qquad\text{in which }
K_2:=\begin{bmatrix}
0&1\\
1&0
\end{bmatrix},\
L_2:=\begin{bmatrix}
0&0\\
1&0
\end{bmatrix}.
\end{equation}

We have proved that $[\Phi_{(x-1)^m}
\diagdown I_m]$ is congruent to
\eqref{jos}. Respectively,
\[
[\Phi_{(x-1)^m} \diagdown
I_m]\oplus\dots\oplus [\Phi_{(x-1)^m}
\diagdown I_m]\quad (r\text{ summands})
\]
is congruent to
\begin{equation}\label{jos1}
A_{m,r}:=\begin{bmatrix}
0&&&K_r\\
&&K_r&L_r\\
&\ddd&\ddd\\
K_r &L_r&&0
\end{bmatrix}
\quad (m^2 \text{ blocks}),
\end{equation}
in which
\[
K_r:=\begin{bmatrix}
0&I_r\\
I_r&0
\end{bmatrix}, \qquad
L_r:=\begin{bmatrix}
0&0_r\\
I_r&0
\end{bmatrix}.
\]
Therefore, $M'$ is congruent to some
matrix
\begin{equation*}\label{hps}
N= A_{m_1,r_1}\oplus
A_{m_2,r_2}\oplus\dots\oplus
A_{m_t,r_t},\qquad m_1>m_2>\dots>m_t,
\end{equation*}
in which $r_i$ is the number of
summands $[\Phi_{(x-1)^{m_i}} \diagdown
I_{m_i}]$ of size $2m_i$ in the direct
sum $M'$. In view of \eqref{ohu}, it
suffices to prove that $N\in
\Xi_{n'}(\mathbb F)$.

If
\begin{equation}\label{rxg}
S^TNS=N,
\end{equation}
then (\ref{dot}) implies that
\begin{equation}\label{nht}
N^{-T}NS=SN^{-T}N,
\end{equation}
in which
\begin{equation}\label{hme}
N^{-T}N=\begin{bmatrix}
A_{m_1,r_1}^{-T}A_{m_1,r_1}
&&0\\
&\ddots&\\
0&&A_{m_t,r_t}^{-T}A_{m_t,r_t}
\end{bmatrix}.
\end{equation}
Since
\[
A_{m_i,r_i}^{-1}=
\begin{bmatrix}
*& \dots&*&-
L_{r_i}^T&K_{r_i}\\
\vdots&\ddd&-L_{r_i}^T
&K_{r_i}\\
*&\ddd&K_{r_i}\\
-L_{r_i}^T&\ddd\\
K_{r_i}&&&&0
\end{bmatrix},
\]
we have
\begin{equation}\label{tlw}
A_{m_i,{r_i}}^{-T} A_{m_i,{r_i}}=
\begin{bmatrix}
I_{2r_i}&H_{r_i}&*&\dots&*\\
&I_{2r_i}&H_{r_i}&\ddots&\vdots\\
&&I_{2r_i}&\ddots&*\\
&&&\ddots&H_{r_i}\\
0&&&&I_{2r_i}
\end{bmatrix},\quad
H_{r_i}:=\begin{bmatrix} I_{r_i}&0
\\0&-I_{r_i}
\end{bmatrix};
\end{equation}
the stars denote unspecified blocks.

Partition $S$ in \eqref{nht} into $t^2$
blocks
\begin{equation*}\label{ytl}
S=\begin{bmatrix}
S_{11}&\dots&S_{1t}\\
\vdots&\ddots&\vdots\\
S_{t1}&\dots&S_{tt}\end{bmatrix},\qquad
S_{ij}\ \text{ is }\ 2m_ir_i\times
2m_jr_j,
\end{equation*}
conformally to the partition
\eqref{hme}, then partition each block
$S_{ij}$ into subblocks of size
$2r_i\times 2r_j$ conformally to the
partition \eqref{tlw} of the diagonal
blocks of \eqref{hme}. Equating the
corresponding blocks in the matrix
equation \eqref{nht} (much as in
Gantmacher's description of all
matrices commuting with a Jordan
matrix, \cite[Chapter VIII, \S
2]{gan}), we find that
\begin{itemize}
  \item
all diagonal blocks of $S$ have the
form
\begin{equation*}\label{skf}
S_{ii}=\begin{bmatrix}
C_{i}&&&&*\\
&C_i^H\\
&&\ddots&\\
&&&C_{i}\\
0&&&&C_{i}^H
\end{bmatrix},\qquad
C_{i}^H:=H_{r_i}C_{i}H_{r_i},
\end{equation*}
(the number of diagonal blocks is even
by \eqref{tpg}), and

  \item
all off-diagonal blocks $S_{ij}$ have
the form
\begin{equation*}\label{crk}
\begin{bmatrix}
*&\dots&*\\&\ddots&\vdots\\
&&*\\0
\end{bmatrix}\text{ if
}i<j,\qquad
\begin{bmatrix}
&*&\dots&*\\&&\ddots&\vdots\\
0&&&*
\end{bmatrix}\text{ if
}i>j,
\end{equation*}
in which the stars denote unspecified
subblocks.\footnote{Each Jordan matrix
$J$ is permutation similar to a Weyr
matrix $W_J$ and all matrices commuting
with $W_J$ are block triangular; see
\cite[Section 1.3]{ser}. If we reduce
the matrix \eqref{hme} by simultaneous
permutations of rows and columns to its
Weyr form, then the same permutations
reduce $S$ to block triangular form.}
\end{itemize}

For example, if
\begin{multline*}
N= A_{6,r_1}\oplus A_{4,r_2}\oplus
A_{2,r_3}\\
=\begin{bmatrix}
0&&&&&K_{r_1}\\
&&&&K_{r_1}&L_{r_1}\\
&&&K_{r_1}&L_{r_1}\\
&&K_{r_1}&L_{r_1}\\
&K_{r_1}&L_{r_1}\\
K_{r_1}&L_{r_1}&&&&0\\
\end{bmatrix}\oplus
\begin{bmatrix}
0&&&K_{r_2}\\
&&K_{r_2}&L_{r_2}\\
&K_{r_2}&L_{r_2}\\
K_{r_2}&L_{r_2}&&0\\
\end{bmatrix}\oplus
\begin{bmatrix}
0&K_{r_3}\\
K_{r_3}&L_{r_3}\\
\end{bmatrix},
\end{multline*}
then
\begin{equation*}\label{ppi}
S=\left[\begin{array}{cccccc|cccc|cc}
C_1&*&*&*&*&* &
*&*&*&* &
  *&*\\
&C_1^H&*&*&*&* & &*&*&* &
  &*\\
&&C_1&*&*&* & &&*&* &
  &\\
&&&C_1^H&*&* & &&&* &
  &\\
&&&&C_1&* & &&& &
  &\\
&&&&&C_1^H & &&& &
  &\\
       \hline
&&*&*&*&* & C_2&*&*&*
&*&*\\
&&&*&*&* & &C_2^H&*&*
&&*\\
&&&&*&* & &&C_2&* &&\\
&&&&&* & &&&C_2^H &&\\
       \hline
&&&&*&* & &&*&* &C_3&*\\
&&&&&* & &&&* &&C_3^H
\end{array}\right],
\end{equation*}
in which
\[
C_1^H:=H_{r_1}C_1H_{r_1},\quad
C_2^H:=H_{r_2}C_2H_{r_2},\quad
C_3^H:=H_{r_3}C_3H_{r_3}.
\]

Now focus on equation \eqref{rxg}. The
subblock at the upper right of the
$i$th diagonal block $A_{m_i,r_i}$ of
$N$ is $K_{r_i}$; see \eqref{jos1}. Let
us prove that the corresponding
subblock of $S^TNS$ is
$C_i^TK_{r_i}C_i^H$; that is,
\begin{equation}\label{bpd}
C_i^TK_{r_i}C_i^H=K_{r_i}.
\end{equation}
Multiplying the first horizontal
substrip of the $i$th strip of $S^T$ by
$N$, we obtain
\[
(0\,\dots\,0\,*|\dots|\,
0\,\dots\,0\,*|\, 0\,\dots\,0\;
C_i^TK_{r_i}\,|\,
0\,\dots\,0\,|\dots|\, 0\,\dots\,0);
\]
multiplying it by the last vertical
substrip of the $i$th vertical strip of
$S$, we obtain $C_i^TK_{r_i}C_i^H$,
which proves \eqref{bpd}. Thus, $\det
C_i\det C_i^H=1$. But
\[
\det S=\det C_1\det C_1^H\cdots\det
C_1\det C_1^H\det C_2\det C_2^H\cdots
\]
Therefore, $\det S=1$, which completes
the proof of Theorem \ref{thes}.

\section*{Acknowledgment}

The authors are very grateful to the
referee for valuable remarks and
suggestions, and to  Professor F.
Szechtman for informing us of the
important paper \cite{d-sz}.

\end{document}